\newcommand{\ben}{\begin{enumerate}}
\newcommand{\een}{\end{enumerate}}
\newcommand{\be}{\begin{equation}}
\newcommand{\ee}{\end{equation}}
\newcommand{\bea}{\begin{eqnarray}}
\newcommand{\eea}{\end{eqnarray}}
\newcommand{\bc}{\begin{center}}
\newcommand{\ec}{\end{center}}
\newtheorem{thm}{Theorem}[section]
\newtheorem{cor}[thm]{Corollary}
\newtheorem{lem}[thm]{Lemma}
\newtheorem{prop}[thm]{Proposition}
\theoremstyle{definition}
\newtheorem{defn}[thm]{Definition}
\theoremstyle{remark}
\newtheorem{rem}[thm]{\rm\bfseries{Remark}}
\newtheorem{lemm}[thm]{Lemma}
\newtheorem{exam}[thm]{Example}
\newcommand{\eps}{\varepsilon}
\def\CP{\mathcal P}
\def\CF{\mathcal F}
\def\CB{\mathcal B}
\def\C{{\mathbb C}}
\def\CC{\mathcal C}
\def\CL{\mathcal L}
\def\R{{\mathbb R}}
\def\C{{\mathbb C}}
\begin{document}

\title[Every $4$-Manifold is BLF]{Every $4$-Manifold is BLF}
\author[AKBULUT and KARAKURT]{Selman Akbulut and \c{C}a\u{g}r\i \;Karakurt}

\thanks{The first named author is partially supported by NSF grant DMS 9971440}

% addresses of authors
\address{Michigan State University Department of Mathematics \\
         East Lansing 48824 MI, USA }
\email{akbulut@math.msu.edu, karakurt@msu.edu}

%\address{}
%\email{}

\begin{abstract}
 Here we show that every compact  smooth $4$-manifold $X$ has a structure of a Broken Lefschetz Fibration (BLF in short). Furthermore, if $b_{2}^{+}(X)> 0$ then it also has a Broken Lefschetz Pencil structure (BLP) with nonempty base locus. This improves a theorem of Auroux, Donaldson and Katzarkov, and  our proof  is topological (i.e. uses $4$-dimensional handlebody theory).
\end{abstract}
\keywords{$4$-manifold, symplectic manifold, Lefschetz fibration}

\maketitle

\vspace{-.3in}

\section{Introduction}

In order to state our theorem in a precise form, we first need to recall some basic definitions and fix our conventions.

\vspace{.1in}
Let $X$ be an oriented $4$-manifold and $\Sigma$ be an oriented surface. We say that a map $\pi: X \to \Sigma $ has a \emph{Lefschetz singularity} at $p\in X$, if after choosing orientation preserving charts $(\C^2,0) \hookrightarrow (X,p)$ and $\C \hookrightarrow \Sigma $, $\pi$ is represented by the map $(z,w)\mapsto zw$. If  $\pi$ is represented by the map $(z,w)\mapsto z\bar{w}$ we call $p$ an \emph{achiral Lefschetz singularity} of $\pi$.  We say $p\in X$ is a \emph{base point} of $\pi$, if it is represented by the map $(z,w)\mapsto z/w$. 
We call a circle $Z\subset X$ a \emph{fold singularity} of $\pi$, if around each $p \in Z$ we can find charts $(\R \times\R^3,\R)\hookrightarrow (X,Z)$ where $\pi$ is represented by the map
\begin{equation} 
(t,x_1,x_2,x_3)\mapsto (t, x_1^2+x_2^2-x_3^2)
\end{equation}

\defn{Let $X^4$ be a compact oriented $4$-manifold. A \emph{broken Lefschetz fibration} structure on $X$ is a map $\pi :X \to S^2$ which is a submersion in the complement of disjoint union of finitely many points $\CP$ and circles $\CC $ in $X$, such that $\CP$ are Lefschetz singularities of $\pi$, and $\CC$ are fold singularities of $\pi$. When $\CC=\emptyset $, we call $\pi$ a  \emph{Lefschetz fibration}.

\vspace{.05in}

We call $\pi$ a \emph{broken Lefschetz pencil}, if there is a finite set of base points $\CB$ in $X$ (base locus) such that the restriction $\pi: X-\CB\to S^2$ is a broken Lefchetz fibration.  When $\CC=\emptyset $ we call $\pi$ a \emph{Lefschetz pencil}. 

\vspace{.05in}

Furthermore, If we allow achiral Lefschetz singularities among the points of $\CP$  we will use the adjective ``\emph{achiral}'' in front of all these definitions, e.g. \emph{Achiral Broken Lefschetz Fibration}. Sometimes for brevity we will refer them simply by LF, BLF, ABLF, LP, BLP, ABLP.}
%\vspace{.05in}
Also, by following a convention due to the first author and Ozbagci, we will call a convex Lefschetz fibration (see Definition 1.5) without achiral or fold singularities a \emph{\mbox{Positive} \mbox{Allowable} Lefschetz Fibration}, or PALF for short, if every vanishing cycle is a non-separating curve on fiber.
Now, we can state our main theorem:

\begin{thm}\label{mainthm}
Every closed smooth oriented $4$-manifold $X$ admits a broken Lefschetz \mbox{fibration.} Furthermore if $b_2^+(X)>0$ then $X$ also admits a broken Lefschetz pencil structure (with nonempty base locus).
\end{thm}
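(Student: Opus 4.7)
The plan is to proceed by $4$-dimensional handle calculus, constructing the map piece by piece. Start with any handle decomposition of $X$ with a single $0$-handle and a single $4$-handle, and split $X = X_1 \cup_M X_2$ where $X_1$ consists of the $0$-, $1$-, and $2$-handles and $X_2$ consists of the $3$- and $4$-handles. Turned upside down, $X_2$ is again a $2$-handlebody, so both halves are controlled by the same kind of data, and $M = \partial X_1 = -\partial X_2$ carries two natural open book decompositions, one inherited from each side.

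The first step is to equip $X_1$ with an achiral convex Lefschetz fibration $\pi_1 \colon X_1 \to D^2_+$. Following the Akbulut--Ozbagci style argument, I would isotope the attaching link of the $2$-handles to sit on pages of a suitable open book on the boundary of $h^0 \cup (\cup_i h^1_i)$; each $2$-handle then contributes a Lefschetz (or, for a wrong-framing handle, achiral Lefschetz) vanishing cycle, producing an ACLF on $X_1$ whose boundary open book has some page $F_1$ with monodromy $\phi_1$. Applying the same construction to $X_2$ flipped upside down yields an ACLF $\pi_2 \colon X_2 \to D^2_-$ inducing a second open book $(F_2,\phi_2)$ on $M$.

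The main technical step is to glue $\pi_1$ and $\pi_2$ into a single map to $S^2 = D^2_+ \cup_{S^1} D^2_-$. Generically $(F_1,\phi_1)$ and $(F_2,\phi_2)$ will differ both in page genus and in monodromy, so direct gluing is impossible. My proposed fix is to interpose a collar $M \times I$ in which the fibration acquires a disjoint union of fold circles that map to a small annular neighborhood of the equator in $S^2$: each round $1$-handle in the cobordism changes the generic fiber genus by $\pm 1$, so after finitely many folds I can force the two page genera to agree, and then Giroux stabilization of the two open books aligns the monodromies. Finally, I would trade each remaining achiral Lefschetz critical point for a pair of fold circles via a local model (the standard $z\bar w$ chart deforms to a nearby fold), upgrading the ABLF to an honest BLF.

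For the broken Lefschetz pencil statement, the hypothesis $b_2^+(X) > 0$ should furnish a class of positive square, and hence, after appropriately positively stabilizing the BLF built above, a section with positive self-intersection. Blowing up along this section produces an exceptional sphere whose normal disks form a pencil, and after blowing down again one obtains a BLP with non-empty base locus. I expect the genuinely difficult step to be the middle one: realizing the abstract genus-change and monodromy-change of the open books by a global fold locus in $X$ covering an equator in $S^2$, rather than as an abstract cobordism. The trade of achiral points for fold circles is a local move and should be routine; likewise the BLP step is essentially formal once the BLF exists and a positive section is available.
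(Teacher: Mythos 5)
Your outline follows the general Gay--Kirby template (split $X$ into two $2$-handlebodies, build achiral convex fibrations on each via Akbulut--Ozbagci, and reconcile the two boundary open books with round handles and stabilizations), but it breaks down at exactly the point you dismiss as routine. There is no local move that deforms the achiral chart $(z,w)\mapsto z\bar w$ into fold circles: an achiral Lefschetz point is not locally equivalent to a fold, and the naive perturbation of $z\bar w$ does not produce the model $(t,x_1,x_2,x_3)\mapsto(t,x_1^2+x_2^2-x_3^2)$. Eliminating achiral points is precisely the hard content that separates the ``ABLF'' conclusion of Gay--Kirby from the BLF conclusion of the theorem. The known ways to do it are either (a) the wrinkled-fibration machinery of Lekili (and, differently, Baykur), where an achiral point is traded for fold circles \emph{with cusps} via a sequence of birth/merge/unsink moves and the cusps must then be cancelled in pairs --- a substantial theorem, not a chart computation --- or (b) the route taken in this paper, which avoids local singularity theory entirely: after matching the boundary open books one arranges that every achiral vanishing cycle links an unknot on a page twice, carves the disk bounded by that unknot out of the convex side (the positron move, which transfers a $0$-framed $2$-handle to the concave side along a section), and then a twist along the resulting $1$-handle converts every bad $tb+1$ framing into a good $tb-1$ framing, turning the ALF into a PALF by a diffeomorphism supported near the carved disks. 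Without one of these mechanisms your ABLF never becomes a BLF. A secondary gap in the same step: ``Giroux stabilization aligns the monodromies'' only after you have made the two supported contact structures isotopic, which requires first matching $c_1$ and $d_3$ and then invoking Eliashberg's classification of overtwisted structures; two open books on the same $3$-manifold need not admit a common positive stabilization.

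The pencil statement is also not formal. The paper does not extract a pencil from a completed BLF; it builds the concave piece as a genuine concave pencil from the start, on a tubular neighborhood of a surface of positive self-intersection (Proposition \ref{concavepen}), so the base locus is present before any gluing. Your proposed route --- stabilize the BLF to find a positive-square section, blow up, then blow down --- is backwards: one passes from a fibration to a pencil by blowing \emph{down} $(-1)$-sphere sections, and a BLF produced by your construction comes with no such sections. So both halves of the statement need a different mechanism than the one you describe.
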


\rem{Previously by using analytic methods Auroux, Donaldson and Katzarkov proved that every closed smooth oriented $4$-manifold with  $b_2^+(X)>0$ admits a broken Lefschetz pencil structure \cite{ADK} (so in particular they proved that, after blowing up X admits broken Lefschetz fibration structure). More recently, by using the map singularity theory Baykur \cite{B} and independently Lekili \cite{L} proved that every closed smooth oriented $4$-manifold $X$ admits a broken Lefschetz fibration.  Our proof and  \cite{L}  gives a stronger result, namely it makes the ``fold singularities'' of $\pi$ global, i.e.  around each singular circle $Z$ we may assume that there are charts 
 $(S^1 \times\R^3, S^1)\hookrightarrow (X,Z)$ so that $\pi$ is represented by 
the map (1). %%% equation number static
Recently, in \cite{GK2} there was an attempt by Gay and Kirby to give a topological proof of \cite{ADK},  which ended up producing a weaker version of Theorem 1.2 with ``achiral broken Lefschetz fibration" conclusion. Here we prove this theorem by topological methods, that is  by using $4$-dimensional handlebody techniques. Also, a corollary to our proof is that, $X$ can be decomposed as a union of a convex PALF and a concave BLF glued along their common (open book) boundaries. We hope that the constructive nature of our proof  will be useful for studying $4$-manifolds in the future.}

\vspace{.02in}
 
Our  proof here proceeds by combining the PALF theory of \cite{AO} with the decomposition (positron) technique of \cite{AM}, and it strongly suggests 
%that %%% removed that
linking a certain $4$-dimensional handlebody calculus on ALF's to the Reidemeister moves of the knots in square bridge position. The philosophy here is that,  just as  a PALF $\CF$ is a topological structure which is a primitive of a Stein manifold $W=|\CF|$,  an ALF is a weaker topological structure which is a primitive of an ``Almost Stein manifold''  (for example, while the boundaries of PALF's are positive open books, the boundaries of ALF's are all open books). ALF's are more flexible objects, which are amenable to handle calculus. Our proof uses this flexibility. We hope to study this approach in a more organized way in a future paper.  
\vspace{.05in}

Before proving the main theorem, we remind the reader some basic notions.

\vspace{.05in}

\defn{Let $Y^3$ be a closed oriented $3$-manifold. An \emph{open book decomposition} of $Y$ is a map onto a $2$ dimensional disk, $\pi:Y\to D^2$ such that the preimage of the interior of the disk is a disjoint union of solid tori and the following conditions hold:
\begin{itemize}
	\item $\pi$ is a fibration over $S^1=\partial D^2$ away from these solid tori.
	\item Restriction of $\pi$ to each of these tori is given by the projection map $S^1\times D^2\to D^2$.
\end{itemize} }
 
For all $\lambda \in S^1$ the closure of the set $P_\lambda:=\{\pi^{-1}(r\lambda): r \in (0,1]\}$ is an oriented surface with non-empty boundary which will be referred as a \emph{page} of the open book decomposition. The flow of a pullback of the unit tangent to $S^1=\partial D^2$ defines a diffeomorphism of a fixed page fixing its boundary pointwise. This diffeomorphism is called the \emph{monodromy} of the open book decomposition.

\vspace{.15in}

Now let $X$ be a $4$-manifold with non-empty connected boundary. Let $f:X\to \Sigma$ be a achiral broken Lefschetz fibration where $\Sigma=S^2$ or $D^2$. The following is a description for several boundary behaviors of $f$. Some of the conditions we list here are redundant but we put them for contrasting from each other for a better understanding. 

\vspace{.03in}
\begin{defn}\label{bountype}
We say 
\begin{enumerate}
	\item $f$ is \emph{convex} if
	
	\begin{itemize}
	\item $\Sigma=D^2$.
	\item Fibers of $f$ have non-empty boundary.
	\item $f|_{\partial X}: \partial X\to D^2$ is an open book decomposition of $\partial X$.
	\item[]
\end{itemize}

	\item $f$ is \emph{flat} if
	
\begin{itemize}
	\item $\Sigma=D^2$
	\item Fibers of $f$ are closed.
	\item $f(\partial X)=S^1=\partial D^2$.
	\item $f|_{\partial X}: \partial X \to S^1$ is an honest fibration.
	\item[]
\end{itemize}
	
	\item	$f$ is \emph{concave} if

\begin{itemize}
	\item $\Sigma=S^2=D^2_+\cup D^2_-$ (the upper and lower hemispheres).
	\item Fibers over $D^2_+$ are closed, while  the fibers over $D^2_-$ have non-empty boundary components.
	\item $f(\partial X)\subset D^2_-$.

	\item $f|_{\partial X}: \partial X\to D^2_{-}$ is an open book decomposition of $\partial X$.
	\end{itemize}
\end{enumerate}
\end{defn}
\vspace{0.15in}

\section{Proof of the Main Theorem}

Here for the sake of clarity, we go directly to the proof of our main theorem by using some technical facts, which are proved in the later sections. 

\begin{proof}\textit{of Theorem \ref{mainthm}:}
Our first task is to find a concave fibration inside of $X$. If $b_2^+(X)>0$ a neighborhood of a surface with positive self intersection can be equipped with a concave pencil, see Proposition \ref{concavepen}. Otherwise, we choose any self intersection zero surface in $X$ and find a concave Broken Lefschetz Fibration  on its neighborhood as in Proposition \ref{broconcave}. This concave piece constructed in either way will be called $X_2$. Let $X_1$ be the complement $X_2$. By Lemma 13 in \cite{GK} we may assume that $X_1$ has no $3$ or $4$-handles. In section \ref{stei} below, we proved that every such manifold is ``Almost Stein''. Thus by applying the algorithm in \cite{AO}, we can find a convex Achiral Lefschetz Fibration on $X_1\to D^2$ with bounded fibers. More specifically we get an ALF $\CF$ with total space $|\CF|=X_1$.

\vspace{.05in}

Having decomposed $X=X_1\cup X_2$ as a convex ALF union concave BLF our next aim is to match the open books on the common boundary of these two submanifolds. In section \ref{matchob} below, we described how techniques of \cite{GK2} and \cite{EF} can be modified to achieve this aim. Our version allows us to get the property that  every achiral vanishing cycle (i.e. the framed knot representing it) links twice with an unknot sitting on a page of the open book (coming from $\partial \CF$). This property will be crucial in the next step of our construction. This is the only non explicit step in our proof because our arguments depend on Eliashberg's classification of overtwisted contact structures and Giroux's correspondence between contact structures and open book decompositions.  Note that we may need to introduce extra achiral singularities to the convex side and broken singularities to the concave side while applying Giroux's theorem, in order to match the boundary open books.

\vspace{.05in}

Next, we isotope the unknots mentioned in the previous paragraph to meridians of the binding of the common open book on $\partial X_1=\partial X_2$ and carve the disks bounded by these curves from $X_1$. This amounts to attaching a zero framed $2$-handle to the concave side $X_2$ along a section. Yet another ``\emph{convex-concave decomposition}" of $X$ is obtained this way. Moreover we did not lose the property that boundary open books match. This is the ``positron move'' introduced in \cite{AM}.

\vspace{.05in}

 The final step is what we call the ``\emph{PALFication}'' of the convex side. Now that we have all the defective handles go over a $1$-handle twice all the bad ($tb+1$) framings can be can be turned into a good ($tb-1$) framing by twisting these handles. The twisting operation  induces a self diffeomorphism $\psi$ and a handle decomposition of the convex side $X_1$. The handlebody decomposition is naturally related to a PALF of $X_1$ and $\psi$ sends nonsingular fibers of the previous ALF into fibers of this PALF. In particular, $\psi$ induces a diffeomorphism (isotopic to identity) preserving the open books on the boundary $\partial X_1$. This means that we can glue our new PALF to the concave side and the result is still diffeomorphic to $X$. 
\end{proof}
An easy corollary of this proof is:
\begin{cor}
Every closed smooth oriented $4$-manifold $X$ admits a BLF (or BLP if $b_2^+>0$), by decomposing $X$ as a convex PALF and a concave BLF (or BLP) glued along the common (open book) boundary.
\end{cor}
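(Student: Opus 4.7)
The plan is to observe that the corollary is essentially a repackaging of the proof of Theorem \ref{mainthm}, where the end product of the construction, rather than just being any BLF/BLP on $X$, is already in the form of a convex PALF glued to a concave BLF/BLP along a common open book boundary. So my approach would be to go through the steps of the main proof and verify that the final glued structure has exactly this form.

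First, I would invoke the decomposition $X=X_1\cup X_2$ from the main theorem, where $X_2$ is either a concave BLP (when $b_2^+(X)>0$, via Proposition \ref{concavepen}) or a concave BLF (via Proposition \ref{broconcave}), and $X_1$, after eliminating $3$- and $4$-handles by Lemma 13 of \cite{GK}, carries a convex ALF. By the matching step (Section \ref{matchob}), after possibly introducing extra achiral singularities into $\CF$ and extra broken singularities into $X_2$, the open books induced on $\partial X_1$ and $\partial X_2$ coincide, and one can arrange that every achiral vanishing cycle of $\CF$ links twice with an unknot lying on a page of the boundary open book.

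Second, I would apply the positron move of \cite{AM}: isotope these unknots to meridians of the binding and carve the bounded disks from $X_1$, equivalently attaching $0$-framed $2$-handles to $X_2$ along sections. This yields a new convex--concave decomposition, still with matched boundary open books, in which the concave side is again a BLF (or BLP, as sections of a pencil remain sections after adding handles along sections), and on the convex side every defective $(tb+1)$-framed handle now traverses a $1$-handle twice. The PALFication step then twists along these $1$-handles: the resulting self-diffeomorphism $\psi$ of $X_1$ converts each bad framing into a good $(tb-1)$ framing, promoting the ALF to a genuine PALF by the \cite{AO} algorithm, while restricting to a boundary diffeomorphism isotopic to the identity through open-book-preserving maps. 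Therefore the PALF on $X_1$ glues to the BLF/BLP on $X_2$ along the matched open book, producing $X$ as a convex PALF union concave BLF (or BLP).

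The only point requiring care, which I would highlight as the main obstacle, is that the PALFication diffeomorphism $\psi$ preserves the open book structure on $\partial X_1$ up to an isotopy rel monodromy, so that the gluing to $X_2$ continues to be well-defined and yields a smooth global broken Lefschetz fibration (respectively pencil) on $X$; this is ensured because the twisting is supported away from the boundary region where the open book structure is read off, and $\psi$ sends nonsingular fibers of the original ALF to nonsingular fibers of the PALF. Once this is checked, the conclusion of the corollary follows directly, with the $b_2^+(X)>0$ case giving the BLP version because the concave side $X_2$ was chosen to be a concave pencil from the outset.
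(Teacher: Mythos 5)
Your proposal is correct and follows essentially the same route as the paper: the corollary is simply read off from the proof of Theorem \ref{mainthm}, whose construction already produces $X$ as a convex PALF glued to a concave BLF (or BLP) along a matched open book boundary, exactly as you verify step by step.
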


\section{Concave Fibrations and Pencils}\label{concavesec}
In this section, we illustrate several constructions of concave fibrations and pencils. The major part of these constructions is basically due to Gay and Kirby.

\vspace{.05in}

The easiest example of a concave fibration is obtained by restricting an  Achiral Broken Lefschetz Fibration on a closed manifold to a tubular neighborhood of a regular fiber union a section. Note that the complement of this set is a convex broken Achiral Lefshetz Fibration. More generally, if we remove a convex ABLF from a ABLF (or ABLP) the remaining part is a concave ABLF (or ABLP). The following proposition uses these ideas to construct an interesting family of concave pencils. It first appeared in a draft of \cite{GK2}. Unfortunately, that draft is no longer available. We give a proof of the proposition for completeness.

\vspace{.05in}

\begin{prop}\label{concavepen}
Let $\Sigma$ be a closed oriented surface. Let $N$ be an oriented $D^2$-bundle over $\Sigma$ with positive Euler number. Then, $N$ admits a concave pencil with no broken, achiral or Lefschetz singularities.
\end{prop}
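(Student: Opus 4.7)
The plan is to construct the concave pencil explicitly using a pencil of sections of a complex line bundle. Equip $\Sigma$ with a complex structure and realize $N$ as the open unit disk bundle (in some Hermitian metric) of a complex line bundle $L\to\Sigma$ of degree $e$. Choose two smooth sections $\sigma_0,\sigma_1\in\Gamma(L)$ such that their difference $\sigma_0-\sigma_1$ has exactly $e$ transverse positive zeros $p_1,\dots,p_e\in\Sigma$; at each $p_i$ the values agree, $\sigma_0(p_i)=\sigma_1(p_i)=:v_i\in L_{p_i}$. Set $\CB:=\{(p_i,v_i)\}_{i=1}^{e}\subset N$ as the proposed base locus and define
\[
\pi\colon N\setminus\CB\longrightarrow S^2=\C\P^1,\qquad \pi(p,v)=[\,v-\sigma_1(p)\,:\,v-\sigma_0(p)\,].
\]
A direct computation shows $\pi$ is a smooth submersion on all of $N\setminus\CB$, so there are no Lefschetz, achiral, or broken singularities. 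For $[a:b]\ne[1:1]$ the fiber is the intersection with $N$ of the graph of the section $\tau_{[a:b]}:=(a\sigma_0-b\sigma_1)/(a-b)$; for $[a:b]=[1:1]$ the fiber is the disjoint union of the $e$ disk fibers of $N$ above the $p_i$, arising as the Hausdorff limit of the nearby fibers.

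Next I would perform a rescaling. After replacing $\sigma_0,\sigma_1$ by sufficiently small constant multiples, the graph of $\tau_{[a:b]}$ lies inside $N$ whenever $[a:b]$ belongs to a closed hemisphere $D^2_+\subset S^2$ disjoint from $[1:1]$, and $\tau_{[a:b]}$ meets $\partial N$ transversely for $[a:b]$ in the complementary hemisphere $D^2_-$ containing $[1:1]$. With this choice the fibers over $D^2_+$ are closed sections of $N$ (each a copy of $\Sigma$ through $\CB$), while the fibers over $D^2_-$ have non-empty boundary on $\partial N$, as required by Definition~\ref{bountype}(3). In particular $\pi(\partial N)\subset D^2_-$.

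The remaining task is to verify that $\pi|_{\partial N}\colon\partial N\to D^2_-$ is an open book decomposition in the sense of Definition~1.4. Near each base point $p_i$, choose holomorphic coordinates $(z,w)\in\C^2$ on $N$ with $\Sigma=\{w=0\}$ and $v_i=0$; there $\pi$ has leading-order form $(z,w)\mapsto[\alpha_i z-w:\beta_i z-w]$ for some nonzero $\alpha_i,\beta_i\in\C$, and its restriction to any small sphere $\{|z|^2+|w|^2=r^2\}$ is---up to a linear automorphism of $\C\P^1$---the standard Hopf open book on $S^3$ with binding the $S^1$-fiber of $\partial N\to\Sigma$ above $p_i$. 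This provides the required solid-torus neighborhood of each of the $e$ binding circles; away from the binding, the pages are sections of $\partial N\to\Sigma\setminus\{p_1,\dots,p_e\}$ (which exist because the Euler-number-$e$ circle bundle trivializes on the $e$-punctured surface) and they fiber continuously over $\partial D^2_-\cong S^1$. The main obstacle is the rescaling: one must pick $\sigma_0,\sigma_1$ so that the family $\{\tau_{[a:b]}\}$ fills $N$ smoothly and crosses $\partial N$ transversely over a full hemisphere of parameters, which is handled by a radial calculation in the Hermitian disk bundle. Once this is in hand, the rest reduces to the model Hopf computation at $\CB$ together with a smooth extension across $\Sigma$.
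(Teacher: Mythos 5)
Your construction is correct and is essentially the same as the paper's: writing $\pi(p,v)=[\,v-\sigma_1(p):v-\sigma_0(p)\,]$ is just a global formula for the pencil spanned by two sections, and after translating by $\sigma_1$ it coincides with the paper's pencil generated by the zero section and the generic section $s=\sigma_0-\sigma_1$ (with the base locus at the $e$ positive intersection points, closed fibers the graphs of sections, and the fiber through the base locus degenerating to the $e$ disk fibers). Both arguments defer the same routine verifications (transversality to $\partial N$ and the Hopf-type open book near the binding), so nothing essential is missing relative to the paper's own level of detail.
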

\begin{proof}
Let $\pi:N\to \Sigma$ be the bundle map. Denote its Euler number by $n=e(\pi)>0$. Take a generic section $s:\Sigma \to N$. It intersects with the zero section at $n$ distinct points. Let $D_1,D_2,\cdots,D_n$ be pairwise disjoint small disk neighborhoods of images of these points in $\Sigma$. The section $s$ determines a trivialization $\tau$ of the complementary disk bundle such that the square part of the  following diagram commutes.

\vspace{0.05in}

$\begin{diagram}
 \node{N-\pi^{-1}(D_1\cup \cdots\cup D_n)} \arrow{e,t}{\tau}\arrow{s,l}{\pi} \node{(\Sigma-(D_1\cup \cdots\cup D_n))\times D^2} \arrow{e,t}{p_2}\arrow{s,l}{p_1}\node{D^2=D^2_+}\\
 \node{ \Sigma-(D_1\cup \cdots\cup D_n)}\arrow{e,t}{\text{id}}\node{\Sigma-(D_1\cup \cdots\cup D_n)} 
\end{diagram}$

\vspace{0.05in}

We define the pencil map $f:N\to S^2=D^2_+\cup D^2_-$ by $f(x)=p_2\circ\tau(x)$ for 
$x$ in $N-\pi^{-1}(D_1\cup \cdots\cup D_n)$, and on each $\pi^{-1}(D_i)\cong D^2\times D^2 \subset \mathbb{C}^2$, $f$ is given by the usual pencil map $\mathbb{C}^2\to \mathbb{C}P^1\cong S^2$. It is an easy exercise to verify that these two maps agree on their common boundary and together they define a concave pencil on $N$.

A simpler explanation for this construction can be given via the following handlebody diagrams. We draw the usual handlebody diagram of $N$, disk bundle over a surface $\Sigma$ of genus $g$ with Euler number $n$. The diagram contains one $0$-handle, $2g$ $1$-handles and a $2$-handle with framing $n$ which is attached to the diagram in such a way that the attaching circle passes through each $1$-handle exactly twice. To visualize a concave fibration, we blow up $N$ at $n$ distinct points as in the figure.

\begin{figure}[h]
	\includegraphics[width=.90\textwidth]{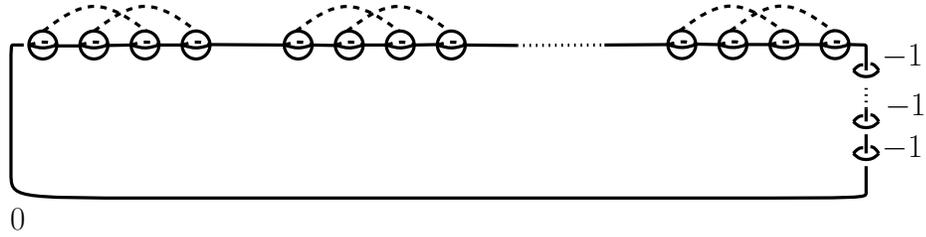}
	\caption{$N\#_n\overline{\mathbb{C}P^2}$}
	\label{fig:concave}
\end{figure}

Forgetting the exceptional spheres for a minute, we see a trivial fibration over disk with fiber genus $g$. To this fibration, we attach several $2$-handles where the attaching circles of the $2$-handle are sections of this fibration restricted to the boundary. By the text in the beginning of this section, the result of attaching these handles is a concave fibration. The boundary open book of this fibration has pages obtained from $\Sigma$ by puncturing at $n$ points. The monodromy of the open book is the product of left handed Dehn twists along curves parallel to boundary components of a page. The last two observations can be verified by the discussion in \cite{EO} or by means of an easy exercise in handle slides:  Anti blow up  at each exceptional sphere. This creates $n$ $0$-framed $2$-handles in the handlebody diagram. Replace each one them with a $1$-handle. The resulting $4$-manifold $X$ admits a convex ALF with the same boundary open book. It is transparent from the handlebody diagram of $X$ that our claims about the boundary open book are correct. Note that if we reverse the orientation of this manifold we get a convex PALF which we can glue to our concave fibration on $N\#_n\overline{\mathbb{C}P^2}$ to get a Lefschetz fibration on the closed manifold $\overline{X}\cup N\#_n\overline{\mathbb{C}P^2}$ where all the singularities lie in the convex side. This Lefschetz fibration is actually symplectic because it admits $n$ sections of self intersection $-1$. Blowing down these sections and isolating $N$, we see that $N$ admits a concave pencil and a compatible symplectic structure $\omega$, making the boundary of $N$ $\omega$-concave.
\end{proof}

\vspace{.05in}

An alternative construction of the concave piece is provided by Gay and Kirby in \cite{GK2} via round handle attachment. We are going to sketch this construction in proposition \ref{broconcave} but before doing that we need to review some basic facts about round handles and their relation with broken fibrations. For details, the reader can consult \cite{GK2} or \cite{B2}. 

\vspace{.05in}

A round $1$-handle is a copy of $S^1\times D^1 \times D^2$ which can be attached to a $4$-manifold along $S^1\times\{-1\}\times D^2\cup S^1\times\{1\}\times D^2$. Up to diffeomorphism, all the attaching information is contained in  a pair of framed links which will be referred as the attaching circles of a round $1$-handle. Moreover, we may choose the framing on one of the attaching circles to be $0$. Every round $1$-handle decomposes uniquely as a union of a $1$-handle and a $2$-handle passing over the $1$-handle geometrically twice algebraically zero times. Therefore, in order to draw a round $1$-handle to a handlebody diagram, it is enough to draw a $1$-handle and a $2$-handle as above. Attaching circles of the round $1$-handle can be visualized by shrinking the legs of the $1$-handle to pairs of points.

\vspace{.05in}
   
Suppose we have an (A)BLF or (A)BLP with boundary which is one of the three types listed in the definition \ref{bountype}. We can attach a round $1$-handle whose attaching circles are sections of the fibration restricted to the boundary. Then, the fibration extends across the round $1$-handle with a fold singularity on $S^1\times\{0\}\times \{0\}$. Resulting object is an (A)BLF or (A)BLP with same type of boundary. Fibers on the boundary, i.e. pages, gain genus. We can visualize this as follows. Attaching circles  of the round $1$-handle  intersect each page at a pair of points. We remove two small disks containing these points  from each page and connect the  boundary circles of these disks by a handle which is contained in the round $1$-handle. The monodromy is composed with some power of a Dehn twist along the boundary circle of one of the disks. This power is precisely the framing of the round $1$-handle.
  
\begin{prop}\label{broconcave}
Let $\Sigma$ be a closed oriented surface. The product $\Sigma\times D^2$ admits a concave Broken Lefschetz Fibration.  
\end{prop}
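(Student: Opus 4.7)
My plan is to construct the concave broken Lefschetz fibration on $\Sigma\times D^2$ by induction on the genus $g$ of $\Sigma$, using the round $1$-handle attachment operation described just before the statement of the proposition as the inductive step.

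For the base case $g=0$ I would first exhibit a concave fibration on $S^2\times D^2$ directly. Applying the construction from the first paragraph of this section to the trivial $S^2$-bundle $S^2\times S^2\to S^2$, restricted to a tubular neighborhood of a regular fiber together with a section, produces a concave fibration whose total space is diffeomorphic to $S^2\times D^2$. Equivalently, one can write down a map $f\colon S^2\times D^2 \to S^2$ directly, with closed sphere fiber over $D^2_+$ and fold singularity circles over the equator, so that the fibers over $D^2_-$ are planar surfaces whose boundary circles trace out the standard open book on $\partial(S^2\times D^2)=S^2\times S^1$ with genus-zero pages.

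For the inductive step, suppose I have a concave BLF on $\Sigma_k\times D^2$ whose boundary open book has pages of genus $k$. I attach a round $1$-handle whose attaching circles are two parallel push-offs of a section of the boundary open book, with the standard framing. By the description preceding the proposition, this extends the concave BLF across the round $1$-handle by introducing one new fold singularity circle, and increases the genus of the open-book pages by exactly one. On the handlebody level, a round $1$-handle is a $1$-handle together with a framing-zero $2$-handle passing over it geometrically twice and algebraically zero times; this is precisely the Kirby move that converts the standard handle diagram of $\Sigma_k\times D^2$ into that of $\Sigma_{k+1}\times D^2$. Iterating this procedure $g$ times and matching the resulting handlebody with the canonical one for $\Sigma\times D^2$ finishes the construction.

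The main technical obstacle is to choose the attaching circles of the round $1$-handles so that the resulting total space is the trivial disk bundle $\Sigma\times D^2$ (Euler number zero) and not some other $D^2$-bundle or a more exotic $4$-manifold; this reduces to tracking framings carefully at each step. A secondary task is to verify that the concave boundary structure (the open book, the fold singular circles, and the closed fibers over $D^2_+$) is preserved under each inductive step, which follows from the general correspondence between round $1$-handle attachment and fold insertion summarized just before the proposition.
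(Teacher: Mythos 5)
There is a genuine gap in both the base case and the inductive step. For the base case: a regular neighborhood of a fiber union a section in $S^2\times S^2$ is the plumbing of two trivial disk bundles over $S^2$; it is diffeomorphic to $S^2\times S^2$ minus a $4$-ball, which has Euler characteristic $3$, not to $S^2\times D^2$. Restricting an (A)BLF to a neighborhood of a fiber union a section does give a concave piece, but never one with total space a trivial disk bundle over the fiber. For the inductive step: attaching a round $1$-handle adds one $1$-handle and one $2$-handle, so it preserves the Euler characteristic, whereas passing from $\Sigma_k\times D^2$ to $\Sigma_{k+1}\times D^2$ decreases it by $2$ (the standard diagram gains two $1$-handles and no $2$-handles). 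Hence the claimed Kirby move does not exist: a single round $1$-handle attachment changes the diffeomorphism type of the total space and cannot produce $\Sigma_{k+1}\times D^2$ from $\Sigma_k\times D^2$. Tracking framings cannot repair this; the obstruction is already homological.

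The paper's proof avoids induction entirely. It starts from the flat fibration $\Sigma\times D^2\to D^2$ for the given genus and attaches a carefully chosen collection of handles that cancel in pairs, so the total space is unchanged: a $1$-handle together with a $0$-framed $2$-handle $\beta_1$ forming a round $1$-handle (this inserts the fold circle but keeps the fibration flat), then a $(-1)$-framed $2$-handle $\beta_2$ attached along a section of the boundary fibration (this is what actually makes the fibration concave, creating disk fibers over the southern hemisphere), and finally a $3$-handle to restore the diffeomorphism type, across which the fibration extends with no new singularities. The two ingredients your proposal is missing are precisely the extra $2$-handle along a section (without it a round $1$-handle only yields a flat broken fibration, never a concave one) and the cancellation mechanism (the $3$-handle plus the handle slides showing $\beta_1,\beta_2$, the $1$-handle and the $3$-handle cancel) that keeps the total space equal to $\Sigma\times D^2$.
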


\begin{proof}
We start with the standard handlebody diagram of $\Sigma\times D^2$. We attach one \linebreak
 $1$-handle and two $2$-handles:  zero framed $\beta_1$ and $-1$ framed $\beta_2$, and a $3$-handle as in Figure~\ref{fig:flattoconcave}. In the figure we did not show the whole diagram. $0$-framed vertical arc is part of the unique $2$-handle of $\Sigma\times D^2$. The framing on $\beta_1$ is chosen so that when we slide it over $\beta_2$ and cancel $\beta_2$ with the $1$-handle, $\beta_1$ becomes a $0$-framed unknot cancelling the $3$-handle. Therefore we did not change the diffeomorphism type of our manifold by these handle attachments.

\begin{figure}[h]
	\includegraphics[width=0.30\textwidth]{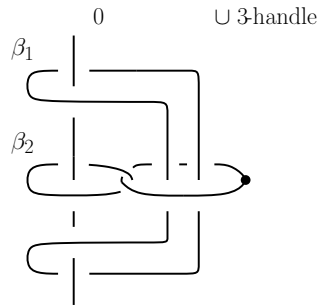}
	\caption{From flat to concave }
	\label{fig:flattoconcave}
\end{figure}

Now we describe how the fibration structure extends across these handles. The \linebreak $1$-handle and $\beta_1$ forms a round $1$-handle which is attached along a bisection of the boundary fibration. As we remarked above, the fibration extends as a broken fibration with one fold singularity inside the round handle. This broken fibration is still flat, i.e. it restricts to a surface bundle over  $S^1$ on boundary. Note that the attaching circle of $\beta_2$ is a section of this surface bundle. Therefore, attaching $\beta_2$ turns the flat broken fibration into a concave broken fibration, creating new disk fibers over the southern hemisphere of the base sphere. Pages are obtained by punching the fibers on boundary along attaching circle of $\beta_2$. Finally, we attach the $3$-handle in such a way that its attaching sphere intersects each page along a properly embedded arc $\gamma$. Concave fibration extends across the $3$-handle without forming any new singularities. The fibers over the southern hemisphere gain a $1$-handle whereas pages lose a $1$-handle (they are cut open along the arc  $\gamma$).
\end{proof}

The last topic we would like to discuss in this section is stabilization of concave \linebreak
fibrations. Recall that positive (resp. negative) stabilization of an open book amounts to plumbing a left (resp. right) handed Hopf band to its pages and composing its monodromy by a right handed (resp. left handed) Dehn twist along the core of the Hopf band. If the open book is the boundary of a convex ALF positive (resp. negative) stabilization can be achieved by creating a canceling $1$-$2$ handle pair where  attaching balls of the \linebreak
$1$-handle are strung on the binding and the $2$-handle lies on a page with framing one less (resp. one more) than the framing induced by the page. Stabilization of concave fibrations is not that straight forward because $1$-handles cannot be attached this way. Instead we create an extra cancelling $2$-$3$ handle pair. Moving the new $2$-handle over the $1$-handle creates a round $1$-handle which can be attached to concave fibration, see Figure \ref{fig:stabconcave}. It is evident from the picture that the resulting concave fibration has a new broken singularity, and a Lefschetz (resp. achiral) singularity inside.

\begin{figure}[h]
	\includegraphics[width=0.30\textwidth]{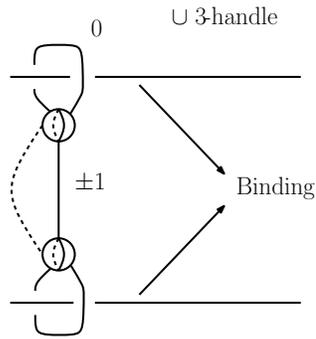}
	\caption{Stabilization of concave fibrations}
	\label{fig:stabconcave}
\end{figure}

It is possible to negatively stabilize concave fibrations without introducing achiral singularities inside. The trick goes as follows; we create another cancelling  $1$-$2$ handle pair, where the $2$-handle has a good (-1) framing. We move bad (+1) framed $2$-handle over the new $1$-handle and regard them as a round $1$-handle, see Figure \ref{fig:goodstab}.

\begin{figure}[h]
	\centering
		\includegraphics[width=0.44\textwidth]{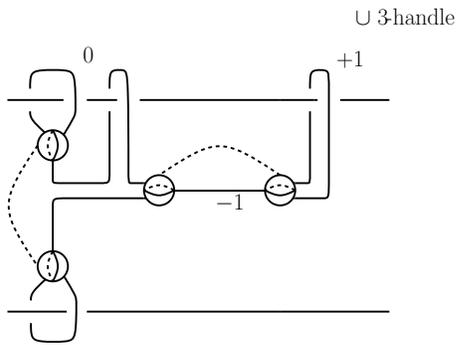}
	\caption{Negative stabilization without achiral singularity}
	\label{fig:goodstab}
\end{figure}

We formed two fold singularities and a Lefschetz singularity inside of the concave fibration. One right handed and two left handed Hopf bands are plumbed to the boundary open book as in Figure 9 of \cite{GK2}.

\section{Stein Manifolds}\label{stei}
Here, we review some basic facts about Stein manifolds of real dimension $4$. Afterwards, we will discuss their natural generalizations. For more discussion about Stein manifolds, reader can consult \cite{OS}.

\vspace{.05in}

Let $X$ be a complex manifold, and let  $J:TX\to TX$ be the corresponding almost complex structure, with its dual $J^*:T^*X\to T^*X$. To any function $\varphi : X\to \mathbb{R}$ we can associate a $2$-form $\omega_\varphi:=-dJ^*d\varphi$ and a symmetric $2$-tensor $g_\varphi:=\omega_\varphi(\cdot,J\cdot)$. A function $\varphi$ as above is called \emph{strictly plurisubharmonic} if $g_\varphi$ is a Riemannian metric. A compact complex manifold $X$ with non-empty connected boundary is called a \emph{Stein manifold} if it admits a strictly plurisubharmonic function which is constant on its boundary. In this case, the pair $(J,\varphi)$ is called a \emph{Stein structure} on $X$.

\vspace{.05in}

Stein manifolds come equipped with many extra structures. The exact form $\omega_\varphi$ is non-degenerate, so it defines a a symplectic structure. The gradient vector field $\nabla \varphi$ of the strictly plurisubharmonic function is a symplectic dilation, i.e. the Lie derivative preserves the symplectic form $\CL_{\nabla\varphi}(\omega_\varphi )=\omega_\varphi$, and it points outwards the boundary. This implies that the set of complex tangencies of the boundary form a contact structure on the boundary.  
In \cite{E}, Eliashberg gave the topological characterization of $4$-dimensional Stein manifolds.
\begin{thm}\label{eli}(Eliashberg)
Let $X=B^4\cup(1\text{-handles})\cup (2\text{-handles})$ be a  $4$-dimensional handlebody with one $0$-handle and no $3$ or $4$-handles. Then
\begin{enumerate}
\item the standard Stein structure on $B^4$ can be extended over the $1$-handles so that the manifold $X_1:=B^4\cup (1\text{-handles})$ is Stein.
\item If each $2$-handle is attached to $\partial X_1$ along a Legendrian knot with framing one less than the Thurston-Bennequin framing then the complex structure on $X_1$ can be extended over the $2$-handles  making $X$ a Stein manifold.
\item The handle decomposition of $X$ is induced by a strictly plurisubharmonic function.
\end{enumerate} 
\end{thm}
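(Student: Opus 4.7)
The plan is to prove the three parts by constructing explicit local Stein models for each handle and gluing them inductively to the Stein structure already built on the lower handles. The base case is the standard Stein ball $(B^4, J_{std}, \varphi_0)$ with $\varphi_0(z,w)=|z|^2+|w|^2$ inside $\mathbb{C}^2$, whose boundary $S^3$ inherits the standard tight contact structure as its field of complex tangencies.

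For part (1), I construct a standard Stein $1$-handle model inside $\mathbb{C}^2$: a region whose plurisubharmonic function has a single index-$1$ Morse critical point and whose sublevel sets realize a $1$-handle attachment. The attaching region consists of a pair of $3$-balls in the contact boundary. By Darboux's theorem in contact topology, any two disjoint $3$-balls in the contact boundary of the already-built Stein piece can be isotoped to match this standard model, after which a suitable smoothing of the plurisubharmonic functions across the gluing region extends the Stein structure over the $1$-handle. Iterating yields a Stein structure on $X_1$ whose boundary is the unique tight contact structure on $\#_{k}(S^1\times S^2)$.

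For part (2), I use the standard Stein $2$-handle model built around a totally real disk $D=\{(x,iy):x^2+y^2\le 1\}\subset\mathbb{C}^2$. A direct calculation shows that $\partial D$ is the standard Legendrian unknot in $S^3$ and that the framing induced on $\partial D$ by the totally real disk $D$ is exactly one less than its Thurston--Bennequin framing. One constructs a plurisubharmonic function on a neighborhood of $D$ with a single index-$2$ Morse critical point at the origin, so that its sublevel sets realize the $2$-handle attachment. To transport this model to an arbitrary Legendrian knot $K\subset\partial X_1$, I invoke the Weinstein neighborhood theorem for Legendrian submanifolds: every Legendrian knot in a contact $3$-manifold has a standard contact neighborhood, so the model glues in to produce a $2$-handle attached along $K$ with framing $tb(K)-1$, extending the Stein structure. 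Part (3) then follows as a byproduct, since at each step $\varphi$ is Morse with exactly one critical point per attached handle, of index matching that handle.

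The main obstacle is the framing calculation in part (2). The identification ``framing $=tb-1$'' must be verified both intrinsically on the totally real disk model, where it follows from computing how the complex tangent bundle of $D$ twists relative to the contact planes along $\partial D$, and after transport via the Weinstein neighborhood theorem to an arbitrary Legendrian knot in a possibly nonstandard contact boundary. A further technical subtlety is the smoothing step: after each handle attachment the plurisubharmonic function must be patched across the attaching region without losing strict plurisubharmonicity, which requires careful convex interpolation of the complex Hessians in a collar near the joining locus, as well as checking that the induced contact structure on the new boundary is again amenable to a Darboux chart so that the inductive process can continue.
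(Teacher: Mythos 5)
This theorem is not proved in the paper at all: it is quoted as Eliashberg's result, with the construction deferred to \cite{E} (and to Gompf \cite{G} for the handlebody formulation), so there is no in-paper argument to compare against. Your outline is the standard Eliashberg--Gompf--Weinstein handle-attachment proof: a model Stein $1$-handle transported by a contact Darboux chart, a model Stein $2$-handle built on a totally real core disk transported by the Legendrian neighborhood theorem, and part (3) obtained for free because $\varphi$ is built with one Morse critical point per handle. As a sketch this is the right architecture and matches the source the paper cites.

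Two points deserve more care than you give them. First, the framing claim in part (2) is stated ambiguously, and the naive reading is off by exactly the amount that matters for this paper. The flat totally real disk $D=\{(x,iy)\}$ is a Seifert surface for the standard Legendrian unknot, so the \emph{surface} framing it induces is the $0$-framing, which is $tb+1$ (since $tb=-1$), not $tb-1$. The framing that equals $tb-1$ is the one coming from a global trivialization of the normal bundle $ND\cong J(TD)$ over all of $D$ (equivalently, the product framing of the model handle): along $\partial D$ the direction $J(T\partial D)$ lies in $\xi$ and gives the contact framing $tb$, and the boundary frame $(\text{tangent},\text{inward normal})$ of $TD$ winds once relative to a global frame of $TD$, producing the shift by $-1$. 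You should make explicit that it is this normal-bundle/handle framing, not the push-off along $D$, that you are computing; since the entire paper turns on distinguishing $tb-1$ from $tb+1$ handles, conflating the two framings here would be a genuine error. Second, the analytically hard content of Eliashberg's theorem is precisely the step you compress into ``careful convex interpolation'': one must choose the shape of the $2$-handle and the interpolating plurisubharmonic function so that the new boundary remains strictly pseudoconvex after the corner is smoothed, and this is where the $tb-1$ condition is actually used (with $tb+1$ the interpolation fails, consistent with the adjunction obstruction showing e.g. that the $-1$-disk bundle over $S^2$ is not Stein). An outline that does not indicate where the framing hypothesis enters the pseudoconvexity estimate has not yet isolated the theorem's real content.
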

By abuse of language, we call a handle decomposition as in Theorem \ref{eli} as a Stein structure without specifying the almost complex structure or strictly plurisubharmonic function. These two objects should be understood to be constructed as in the proof in \cite{E}. Now by
using the characterization in Theorem \ref{eli} we give the following definition to generalize the Stein manifolds to a larger category. 
\begin{defn}
Let $X$ be a compact oriented $4-$manifold. X is said to be \emph{almost Stein} if it admits a proper Morse function $f:X\to\mathbb{R}$ with the following three properties:
\begin{enumerate}
	\item Critical points of $f$ have index no bigger than $2$,
	\item Boundary $\partial X$ is a level set for $f$,
	\item Attaching circles of $2-$handles associated to $f$ are isotopic to some Legendrian curves in $(\partial X^{(1)},\xi_0)$ such that the framing on any handle is either one less than or one greater than the Thurston-Bennequin framing. Here, $X^{(1)}$ is the union of $0$ and $1-$handles of $X$ and $\xi_0$ is the unique Stein fillable contact structure on its boundary. 
\end{enumerate}
\end{defn}

In Theorem \ref{alstein} below we show that almost Stein manifolds exist in abudence. The same theorem, stated in different terms was proved in \cite{GK}. In an almost Stein manifold the $2-$handles, which are attached with framing $tb-1$, will be called \emph{good handles}, otherwise they will be called \emph{defective}. 

\begin{thm}\label{alstein} Every $4$-dimensional handlebody $X$ with without $3$ and $4$-handles is an almost Stein manifold with at most one defective handle.
\end{thm}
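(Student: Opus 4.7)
The plan is to start from an arbitrary handlebody decomposition of $X$ using only $0$-, $1$- and $2$-handles, and modify the attaching data of the $2$-handles by Kirby moves until the hypotheses of Definition 4.2 are satisfied, with at most one handle of framing $tb+1$.

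First I would exploit the fact that $\partial X^{(1)} \cong \#^k(S^1 \times S^2)$ carries a unique Stein-fillable contact structure $\xi_0$, coming from Theorem \ref{eli}(i). This lets me isotope the attaching link $L_1 \cup \cdots \cup L_m$ of the $2$-handles to a Legendrian link in $(\partial X^{(1)}, \xi_0)$, with Thurston--Bennequin invariants $t_i := tb(L_i)$ and prescribed topological framings $n_i$. For any $L_i$ with $n_i \le t_i - 1$, apply $t_i - n_i - 1$ Legendrian stabilizations; this preserves the smooth knot type and the framing $n_i$ while lowering $tb$ to $n_i + 1$, so the handle is now good.

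The interesting case is $n_i \ge t_i$. Here I would introduce a canceling $1$-$2$ handle pair (a dotted unknot together with a $0$-framed meridian near $L_i$), then slide $L_i$ over the meridian and route it through the new $1$-handle. Such a re-routing preserves the underlying smooth $4$-manifold and the framing of $L_i$, but allows a new Legendrian representative of $L_i$ in $(\#^{k+1}(S^1\times S^2), \xi_0)$ with a strictly larger $tb$, because an additional traversal of the newly introduced $1$-handle raises the achievable Thurston--Bennequin invariant. After finitely many such moves I bring the effective gap $n_i - t_i$ into $\{-1, 0, 1\}$, and one more stabilization produces framing either $tb - 1$ (good) or $tb + 1$ (defective).

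To force at most one defective handle, I would pair up any two defective handles and slide one over the other. A standard framing computation gives the new framing as $(t_i+1) + (t_j+1) \pm 2\,\mathrm{lk}(L_i, L_j)$, and the merged Legendrian can be chosen (after the routings already performed in the previous step) to have sufficiently large $tb$ so that further Legendrian stabilizations convert the combined handle into a good one. Iterating, the number of defective handles is reduced to at most one.

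The main obstacle is the routing step: one must verify that passing the attaching circle through newly introduced canceling $1$-handles shifts $n_i - t_i$ by the expected amount without disturbing the Legendrian type of the other components, and without introducing handles of index greater than $2$. The residual ``at most one defect'' count reflects a mod-$2$ parity constraint in the framings that in general cannot be removed by purely local moves, which is why the definition of almost Stein was chosen to tolerate exactly this much slack.
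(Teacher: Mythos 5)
Your skeleton matches the paper's: Legendrian-realize the attaching link in $(\partial X^{(1)},\xi_0)$, add zigzags when the prescribed framing is at most $tb-1$, and introduce a canceling $1$--$2$ pair to deal with the opposite case. The first half is fine, but the mechanism you give for \emph{decreasing} the framing relative to $tb$ is wrong. The claim that ``an additional traversal of the newly introduced $1$-handle raises the achievable Thurston--Bennequin invariant'' cannot be the engine of the proof: $\xi_0$ on $\#^{k+1}(S^1\times S^2)$ is still Stein fillable, so $tb$ of any Legendrian representative is still constrained by the slice--Bennequin/adjunction bound in the filling. If routing through Stein $1$-handles alone could raise $tb$ arbitrarily, every $2$-handlebody would be Stein, contradicting Eliashberg's theorem that $D^2\times S^2$ is not Stein (see Example \ref{deffix}). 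What actually does the work in the paper is that the auxiliary $2$-handle of the canceling pair is attached with framing $tb+1$, i.e.\ it is itself \emph{defective}: this is a negative stabilization of $\xi_0$, it creates an overtwisted disk in $\partial X^{(1)}$, and sliding $h$ over that $2$-handle raises $tb(h)$ without changing its smooth framing. You do slide over the $0$-framed meridian, which is that defective handle, but you never identify its defectiveness as the essential point, so as written the step is unjustified.

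The second gap is the count. You introduce a new canceling pair for each bad handle and then try to shrink the resulting pile of defective handles by sliding one over another, asserting that the band sum ``can be chosen to have sufficiently large $tb$.'' This is false in general: for two unlinked $0$-framed unknots the band sum is again a $0$-framed unknot, whose maximal $tb$ is $-1$, so it remains defective. There is also no ``mod-$2$ parity constraint'' in the statement --- a genuine Stein $2$-handlebody has zero defective handles. The paper gets ``at most one'' far more cheaply: a \emph{single} auxiliary pair, with its single $tb+1$-framed $2$-handle, is used to lower the relative framing of \emph{every} handle that needs it (each such handle is slid over the same auxiliary $2$-handle as many times as necessary). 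All the original handles become good, and the unique defective handle at the end is just that one auxiliary handle.
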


\begin{proof}
Without loss of generality we can assume that $X$ has one $0$-handle. By the first part in theorem \ref{eli}, $X_1=B^4\cup (1\text{-handles})$ has a Stein structure. We must show that $2$-handles can be arranged to be attached with correct framings. Let $h$ be a $2$-handle in $X$. First, we isotope the attaching circle $K$ of $h$ to a Legendrian knot. Say, the framing on $K$ is $n$ with respect to the Thurston-Bennequin framing. The number $n$ can be increased arbitrarily by adding zigzags to the front projection of $K$. Decreasing $n$ is tricky. Near a local minimum of the front projection of $K$, we make Legendrian Reidemeister~1 move, see \cite{G}. This operation does not change the Thurston-Bennequin framing. Next, we create a pair of cancelling $1$ and $2$-handles, where the $2$-handle has framing $+1$ as in the left hand side of the Figure \ref{alsteinp}. Sliding $h$ over the new $2$-handle, we can decrease its framing. Finally, we observe that the same cancelling pair can be used to fix the framing of all $2$-handles, leaving only one defective handle as desired.  
\begin{figure}[h]
	\includegraphics[width=0.60\textwidth]{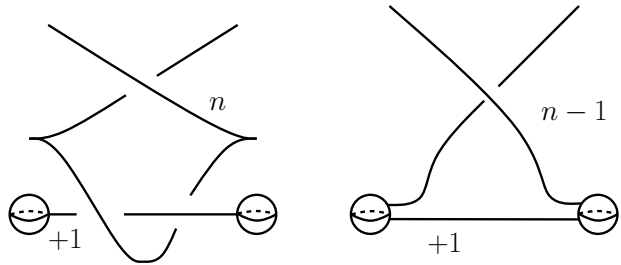}
	\caption{Fixing framings}
	\label{alsteinp}
\end{figure}

\end{proof}

\vspace{.05in}

\begin{rem}

\begin{enumerate}
\item Creating a pair of $1$-handle and a $+1$ framed $2$-handle corresponds to negative stabilization of the contact structure $\xi_0$ on $\partial X_1$. This creates an overtwisted disk, and sliding a Legendrian knot over that disk increases its Thurston-Bennequin invariant. This observation was used by Gay and Kirby to prove Theorem \ref{alstein}. The only significant difference is that they apply Lutz twists to create an overtwisted disk.

\vspace{.02in}

\item Yet another proof for Theorem \ref{alstein} via \cite{AO} can be given via the correspondence between achiral  Lefschetz fibrations and almost Stein structures which we state more precisely in Section 6 below. Harer \cite{H} (see also \cite{EF}) proved that every $4$-dimensional handlebody $X$ without any $3$ or $4$-handles admits an achiral Lefschetz fibration over the disk with bounded fibers. This fibration naturally makes $X$ an almost Stein manifold.
\end{enumerate}
\end{rem}

The main theorem in this section asserts that by carving certain properly embedded disk we can transform defective handles into good ones. The following example provides a model for that transformation.

\vspace{.05in}

\begin{exam}\label{deffix} Let $X=D^2\times S^2$. By [E], $X$ does not admit a Stein structure. The simplest handlebody decomposition of $X$ consists of one $0$ and one $2-$handle, where the $2-$handle is attached to an unknot with zero framing, as in Figure \ref{fig:legamend}. The obvious Legendrian realization of this unknot has front projection with two cusps. Thurston-Bennequin number of this Legendrian knot is $-1$ so, the $2-$handle is attached with framing $tb+1$.  An alternative Legendrian realization of the unknot is drawn next to it. The curve $\gamma$ is not a part of the handlebody yet. We will show that the manifold $\widetilde{X}=X-D$ is Stein where $D$ is a properly embedded disk in $0-$handle whose boundary is $\gamma$.

\vspace{.02in}

Consider the Stein manifold on the right hand side of Figure \ref{fig:legamend}. It is an easy exercise to show this manifold is diffeomorphic to $\widetilde{X}$. Indeed, we can obtain a handlebody decomposition of $\widetilde{X}$ by putting a dot on $\gamma$  (i.e. turning it into a $1$-handle) in the previous figure. Create a cancelling $1$ and $2$ handle pair,  where the two handle has framing $-1$. Slide the new $1$-handle over the $1$-handle $\gamma$. Next slide two strands of original $2$-handle over the new $2$-handle, and then cancel the $1$ and $2-$handle pair. The resulting manifold is the smooth manifold decribed by the next figure.
\begin{figure}[htbp]
	\includegraphics[width=0.90\textwidth]{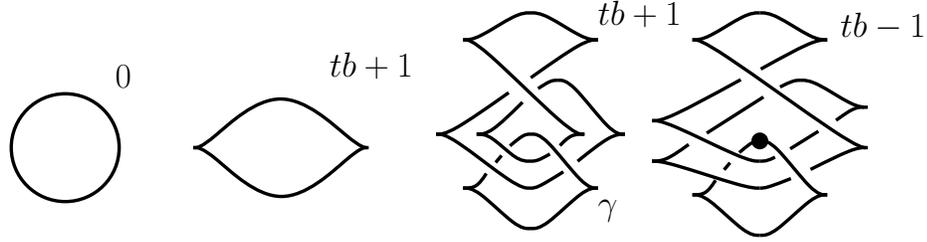}
	\caption{Repairing defective handles in an almost Stein manifold}
	\label{fig:legamend}
\end{figure}

\end{exam}

Applying the procedure  described in the Example \ref{deffix} to each defective handle in an almost Stein manifold we arrive at the following conclusion.
\begin{thm}
Every almost Stein manifold admits a Stein structure in the complement of finitely many properly imbedded disks.
\end{thm}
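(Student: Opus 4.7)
The plan is to apply the local carving move of Example~\ref{deffix} independently to each defective $2$-handle. Let $X$ be almost Stein with handle decomposition $X = X^{(1)} \cup h_1 \cup \cdots \cup h_m$, where $X^{(1)} = B^4 \cup (\text{1-handles})$ carries the Stein structure given by Theorem~\ref{eli}(1) and each $h_i$ is a $2$-handle attached along a Legendrian knot $K_i$ in $(\partial X^{(1)}, \xi_0)$. Denote by $h_{i_1}, \ldots, h_{i_k}$ the defective handles, so each $K_{i_j}$ carries framing $tb(K_{i_j})+1$.

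First, I would perform a local Legendrian isotopy on each $K_{i_j}$ that introduces a small feature matching the right-hand side of Figure~\ref{fig:legamend}: two nearby cusps joined by a short arc $\gamma_j$ running through the $0$-handle. Such an isotopy does not change $tb(K_{i_j})$, and the arcs $\gamma_1, \ldots, \gamma_k$ can be chosen to lie in mutually disjoint small $4$-balls in $X^{(1)}$, away from the good handles.

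Second, I would extend each $\gamma_j$ to a properly imbedded disk $D_j \subset X$ with $\partial D_j = \gamma_j$ and set $\widetilde{X} := X \setminus \nu(D_1 \sqcup \cdots \sqcup D_k)$. Carving $D_j$ is the same as placing a dot on $\gamma_j$ in the diagram. On each pair $(\gamma_j, h_{i_j})$ I would then run the handle calculus of Example~\ref{deffix} in parallel: introduce a cancelling $1$-$2$ pair whose $2$-handle has framing $-1$ and sits in a small neighborhood of $\gamma_j$, slide the new $1$-handle over $\gamma_j$, slide two strands of $h_{i_j}$ across the new $-1$-framed $2$-handle, and cancel the auxiliary $1$-$2$ pair. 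The outcome is a new handle decomposition of $\widetilde{X}$ in which each former defective $h_{i_j}$ is attached along a Legendrian knot (isotopic to $K_{i_j}$) with framing $tb-1$, while the originally good handles are untouched.

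The main obstacle is ensuring that the carvings for distinct defective handles do not interfere with one another or with the good handles. This is resolved by the locality of Example~\ref{deffix}: every slide and cancellation associated to $(\gamma_j, h_{i_j})$ is supported in an arbitrarily small neighborhood of $\gamma_j \cup K_{i_j}$, and the cusp-introducing isotopy in step one can be arranged so that these neighborhoods are pairwise disjoint and meet no other attaching circle. Once every defective handle has been converted, every $2$-handle of $\widetilde{X}$ is Legendrian with framing $tb-1$, and Theorem~\ref{eli}(2) extends the Stein structure on $X^{(1)}$ to all of $\widetilde{X}$, proving the claim.
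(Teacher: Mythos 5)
Your proposal is correct and follows essentially the same route as the paper, which simply applies the carving move of Example~\ref{deffix} to each defective handle in turn; your added care about keeping the local modifications in disjoint neighborhoods is a reasonable elaboration of what the paper leaves implicit. One small wording point: each $\gamma_j$ should be described as a closed Legendrian unknot encircling the two strands of $K_{i_j}$ near the added cusps (bounding the properly imbedded disk $D_j$ in the $0$-handle), rather than as an ``arc,'' but your subsequent use of it is consistent with this.
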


By attaching a good handle and homotopically cancelling this $1$-handle created in the procedure, we see that \emph{Steinification} of every almost Stein manifold can be made within its homotopy class. This was first observed by the first author and Matveyev in \cite{AM} (e.g. positrons). 

\begin{thm}(Akbulut-Matveyev)
Every almost Stein manifold is homotopy equivalent to a Stein manifold.
\end{thm}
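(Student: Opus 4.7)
The plan is to combine the carving construction just established with the \emph{positron} trick of Akbulut--Matveyev. Let $X$ be almost Stein, and let $D_1,\dots,D_k$ be the properly embedded disks produced by the previous theorem, so that $Y:=X\setminus\nu(D_1\sqcup\cdots\sqcup D_k)$ carries a genuine Stein structure. Dually, each carving of $D_i$ introduces an extra $1$-handle into a Stein handle decomposition of $Y$, and the original $X$ is recovered from $Y$ by reattaching a $2$-handle $h_i$ to each such new $1$-handle---with precisely the defective framing that forced carving in the first place.

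First I would, for each new $1$-handle of $Y$, attach a fresh $2$-handle $h_i'$ whose attaching circle runs over that $1$-handle algebraically $\pm 1$ times (i.e.\ is a meridian of it), realized as a Legendrian knot in $(\partial Y^{(1)},\xi_0)$ with framing exactly $tb-1$. By the second part of Eliashberg's Theorem~\ref{eli}, the Stein structure on $Y$ extends across every $h_i'$, so the resulting manifold $Z:=Y\cup h_1'\cup\cdots\cup h_k'$ is itself Stein.

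Next I would verify $Z\simeq X$. Since each $h_i'$ is attached with algebraic intersection $\pm 1$ to the new $1$-handle, the pair (new $1$-handle, $h_i'$) is a \emph{homotopically} cancelling pair---though typically not a geometrically cancelling one, which is exactly the positron phenomenon. Homotopically cancelling $1/2$-handle pairs may be inserted or deleted without altering the homotopy type, so $Z$ is homotopy equivalent to $Y$ with the original defective $h_i$'s reattached in place, and this is just $X$. Equivalently, the homotopy type of a handlebody depends only on the homotopy classes of the attaching maps of its $2$-handles, and the attaching circles of $h_i$ and $h_i'$ are both meridional, hence homotopic in $\partial Y^{(1)}$.

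The main technical obstacle is to guarantee that one can always realize the Legendrian meridian with framing precisely $tb-1$ while keeping its homotopy class meridional. This is handled by the standard device already exploited in the proof of Theorem~\ref{alstein}: start from an arbitrary embedded meridian, Legendrian-isotope it in $(\partial Y^{(1)},\xi_0)$, and add enough downward zigzags to its front projection to drive the framing down to $tb-1$. The conceptually subtle point---that this homotopical cancellation generally cannot be promoted to a geometric one, so $Z$ need not be diffeomorphic to $X$---is exactly what allows $Z$ to carry a Stein structure when $X$ itself does not (compare Example~\ref{deffix}, where $X=D^2\times S^2$ admits none), and it is the essence of the positron construction.
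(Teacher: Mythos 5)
Your proposal is correct and follows essentially the same route as the paper: the paper's own (one-line) argument is precisely to attach a good ($tb-1$) $2$-handle that homotopically cancels each $1$-handle created by the carving, invoking Eliashberg's theorem for Steinness and the homotopy invariance of attaching maps for the homotopy equivalence with $X$. Your elaboration of why the cancellation is only homotopical rather than geometric (the positron phenomenon) is consistent with the paper's intent.
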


Almost Stein manifolds do not carry as many extra structures as Stein manifolds. For example, they do not naturally admit a symplectic structure. An almost complex structure can be defined on an almost Stein manifold, but only in the complement of finitely many points each contained in a $2$-handle. On the other hand, we still have a contact structure on the boundary defined by means of the set of complex tangencies. This contact structure will be denoted by $\xi_X$ for an almost Stein manifold $X$.

The handlebody diagram inducing an almost Stein structure on $X$ can easily be converted into a contact surgery diagram in the sense of Ding and Geiges \cite{DG}. We replace each $1$-handle with a circle with dot, a convenient notation first introduced in \cite{A} (which means that the interior of the disk, which this circle bounds, is pushed into the  $4$-manifold interior, then its tubular neighborhood is removed). The only additional assumption is that we choose the circle to be a Legendrian unknot, and we put a negative twist to each $2$-handle that passes over the $1$-handle  as in Figure \ref{fig:legamend}. The twisting operation is done in order to match the conventions to calculate the relative framing with respect to Thurston-Bennequin framing. We convert our Legendrian handlebody diagram into a contact surgery diagram by erasing the dots on $1$-handles and regarding them as \linebreak
$(tb+1)$-framed $2$-handles. Topologically, this operation corresponds to surgering some circles, but it does not affect the boundary contact manifold.  
  
Homotopy invariants of $\xi_X$ can easily be read off from the front projection of the Legendrian handlebody diagram inducing the almost Stein structure of $X$, see \cite{G}. The Poincar\'e dual of its first Chern class is represented by a chain $\sum k_i[\mu_i]$, where  $k_i$ is the rotation number of a Legendrian $2$-handle and $[\mu_i]$ is the homology class represented by the meridian of that handle. On $\partial X$, the oriented $2$-plane fields having the same first chern class $c_1$ is a $\mathbb{Z}/d$ torsor, where $d$ is the divisibility of $c_1$. Each element of this torsor can be distinguished by its $3$-dimensional invariant $d_3=(c_1(X)^2-3\sigma(X)-2 \chi(X))/4 +q$, provided $c_1$ is torsion. Here $q$ is the number of defective handles, $c_1(X)$ is a cohomology class such that $c_1(X)(h)=\text{rot}(L)$ for every $2$-handle $h$ with Legendrian attaching circle $L$, and the cup square of $c_1(X)$ is taken in some appropriate sense. Let us call the operation of creating a cancelling pair of a $1$-handle and a $(tb+1)-$framed $2$-handle by \emph{negative stabilization}. This operation does not have any topological effect but it does change the almost Stein structure on $X$ and the contact structure on $\partial X$. The key observation here is that if we have two almost Stein manifolds with same first Chern class of the contact structures on the boundary, then we can match the contact structures up to homotopy by negatively stabilizing one of the almost Stein manifolds.  

\section{Destabilizations}

This section is devoted to a technical lemma. It will be used to determine the monodromy of a $(p,q)$ torus knot but we believe that the lemma is important on its own as it provides a sufficient condition to destabilize an open book.

\begin{lemm}\label{desta}
Let $S$ be a page of an open book decomposition of the $3$-sphere $S^3$. Suppose that $K$ is a non-separating knot on $S$ with self linking number $-1$ with respect to the framing induced by $S$. Then $S$ is isotopic rel $K$ to the plumbing of some surface $\widetilde{S}$ with a Hopf band $H$ whose core is $K$. Moreover, the surface  $\widetilde{S}$ is isotopic to $S$ cut along a properly embedded arc $\gamma$ that intersects transversely with $K$ at precisely one point.
\end{lemm}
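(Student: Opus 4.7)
The plan is to recognize a small neighborhood of $K$ inside $S$ as a Hopf band embedded in $S^3$, and then read off the plumbing decomposition of $S$ from it. First I would pick the arc $\gamma$: because $K$ is non-separating, its mod-two homology class in $H_1(S,\partial S)$ is nontrivial, so after standard surgery on intersection points one obtains a properly embedded arc $\gamma$ meeting $K$ transversely in a single point. The regular neighborhood $H:=\nu(K\cup\gamma)\subset S$ satisfies $\chi(H)=\chi(K\cup\gamma)=0$ and is orientable with nonempty boundary, so $H$ is an annulus; by construction $K$ is isotopic to its core and $\gamma$ is a cocore arc of $H$.

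The heart of the proof is to identify $H$, as an annulus embedded in $S^3$, with a standard negative Hopf band. Since $H\subset S$, the framing induced on $K$ by $H$ coincides with the page framing, and hence the self-linking of $K$ with respect to $H$ equals $-1$. Two annuli in $S^3$ with the same core and the same integer framing are isotopic, so it suffices to show that $K$ is unknotted. This is where the hypothesis that $S$ is a page of an open book of $S^3$ is essential. I would use the product structure $S^3\setminus S\cong S\times(0,1)$ coming from the open book, together with the self-linking condition, to exhibit a disk in $S^3$ bounded by a push-off $K^+$ of $K$ that meets $K$ algebraically once, thereby forcing $K$ to be an unknot. I expect this to be the main obstacle of the proof: it is the step where the precise value $-1$ (rather than some other integer) of the self-linking, together with the open-book structure on $S^3$, must be used in an essential way. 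Once $K$ is unknotted with framing $-1$ on $H$, the annulus $H$ is isotopic rel $K$ to the standard negative Hopf band in $S^3$.

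With $H$ identified as a Hopf band, I would set $\widetilde{S}:=\overline{S\setminus(H\setminus R)}$, where $R\subset H$ is a small rectangular neighborhood of the cocore arc $\gamma$. Then $\widetilde{S}$ and $H$ meet precisely along the square $R$, with the two sides of $R$ lying on $\partial H$ matching interior arcs of $R$ in $\widetilde{S}$ and vice versa; this is exactly the defining property of a Murasugi sum, so $S=\widetilde{S}\ast H$ is the plumbing of $\widetilde{S}$ with the Hopf band $H$ whose core is $K$. For the moreover clause, $\widetilde{S}$ differs from $S$ only by deletion of the open strip $H\setminus R$, which deformation retracts onto the arc $\gamma$; removing an open tubular neighborhood of a properly embedded arc is by definition cutting $S$ open along that arc, so $\widetilde{S}$ is isotopic to $S$ cut along $\gamma$, and $\gamma$ meets $K$ transversely in exactly one point by construction.
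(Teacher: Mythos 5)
Your strategy --- first identify an annular neighborhood of $K$ abstractly as a standard Hopf band, then read off the plumbing from the combinatorics of a square --- is genuinely different from the paper's, which instead performs an explicit ambient isotopy of $S$ rel $K$ (finger moves along a parallel copy of $\gamma$ and around the two push-offs $K_1,K_2$ of $K$, Figure \ref{destabi}) that physically isolates the annulus from the rest of the surface. The two steps you leave unfinished are precisely the ones that isotopy is designed to supply. First, the unknottedness of $K$: what you propose --- exhibiting a disk bounded by a push-off of $K$ --- is a restatement of the goal rather than an argument, and the stated hypotheses do not in fact force $K$ to be unknotted. Any Legendrian knot with $tb=-1$ (e.g.\ a twice-stabilized right-handed trefoil) lies on a page of some open book of $S^3$ with page framing $-1$, and a curve with nonzero page framing is automatically non-separating; for such a $K$ the annulus $\nu(K)$ is knotted, hence not a Hopf band, and by Gabai's theorem on Murasugi summands of fiber surfaces it cannot be a plumbing summand of a page at all. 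So the step you call ``the main obstacle'' cannot be carried out from the hypotheses alone; the lemma is really meant for the manifestly unknotted grid curves of Section 6, where the paper takes the Hopf-band identification for granted and puts all the work into the isotopy.

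Second, the final paragraph does not establish a plumbing even if one grants that $H$ is a standard Hopf band. With $\widetilde S=\overline{S\setminus(H\setminus R)}$ the intersection $H\cap\widetilde S$ is not the square $R$: it also contains the entire frontier of $H$ in $S$, since $\overline{S\setminus H}\subset\widetilde S$ abuts $H$ along all of $\partial H$ away from $\partial S$. Equivalently, $R$ fails to separate $S$ into $H\setminus R$ and $\widetilde S\setminus R$, which it must in a Murasugi sum. Moreover a Murasugi sum requires, beyond the square intersection, a $2$-sphere through $R$ splitting $S^3$ into two balls containing $H$ and $\widetilde S$ respectively; knowing that $H$ on its own is isotopic to a model Hopf band says nothing about how $\widetilde S$ is entangled with it in $S^3$. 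Both defects are repaired only by an isotopy of $S$ rel $K$ that pulls the complement of the band off a ball neighborhood of $H$ except through the square --- the finger-move isotopy which is the actual content of the paper's proof and is entirely absent from your proposal.
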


\begin{proof}
Since $S$ has nonempty boundary and $K$ is non-separating, the arc $\gamma$ in the statement exists. Create two parallel copies of $K$ on $S$, one to the left of  $K$ and other to the right. Call them $K_1$ and $K_2$ respectively. The union of these two curves is the boundary of a Hopf band $H$ whose core is $K$. 

In Figure \ref{destabi} below we describe an isotopy of $S$ in which it is apparent that $H$ is plumbed to the rest of the surface. Starting at a little right of one of the components of $\gamma\cap \partial S$ we perform a finger move along a parallel copy of $\gamma$ till we hit $K_1$. Next we turn right and continue the finger move along $K_1$ until we approach $\gamma$ from its left. Finally, we perform the same isotopy from the other side but this time we turn left at $K_2$. 

This isotopy isolates $H$ from $S$. We can write $S=H\cup \widetilde{S}$ where the intersection $H\cap \widetilde{S}$ is a small neighborhood of the arc $\gamma\cap H$ in $H$. This is precisely the definition of plumbing.

To see why the last statement holds, we cut $H$ along an arc from $K_1$ to $K_2$ intersecting with $K$ exactly once. Clearly, we deplumbed $H$ from $\widetilde{S}$. Now, pulling everything back, the assertion should be clear. 
\begin{figure}[h] 
	\includegraphics[width=0.50\textwidth]{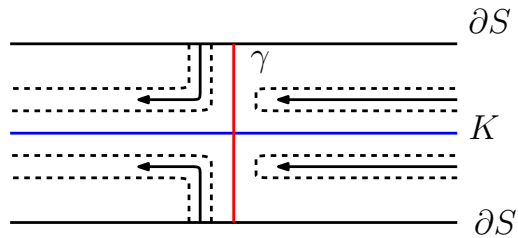}
	\caption{Isotopy by finger moves. Neighborhood of the blue curve is a Hopf band plumbed to the rest of the surface.  }
	\label{destabi}
\end{figure}
    
\end{proof}

\section{ALF-Almost Stein}\label{alfol}

In this section, we are going to review the algorithm of Akbulut and Ozbagci to establish a one to one correspondence between ALFs and almost Stein manifolds. Originally, in \cite{AO} authors established this correspondence between PALFs and Stein manifolds but their arguments hold in our case as well.

\begin{thm}\label{akoz}(Akbulut-Ozbagci)
A $4$-manifold is almost Stein if and only if it admits an ALF over the disk with bounded fibers.
\end{thm}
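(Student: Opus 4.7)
The plan is to adapt, essentially verbatim, the algorithm of Akbulut--Ozbagci \cite{AO} which established the analogous equivalence between PALFs and Stein manifolds, and observe that the sign of a Lefschetz critical point is precisely governed by whether a $2$-handle is attached with framing $tb-1$ (positive) or $tb+1$ (achiral). Thus the two notions of ``defective versus good'' on the handlebody side correspond exactly to ``achiral versus positive'' on the fibration side.

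For the direction \emph{almost Stein} $\Rightarrow$ \emph{ALF with bounded fibers}, I would start with an almost Stein handle decomposition $X = B^4 \cup (\text{1-handles}) \cup (\text{2-handles})$. The $0$-handle together with the $k$ one-handles forms $\natural^k (S^1 \times D^3)$, which carries a canonical PALF whose fiber is a disk with $k$ holes and which has no Lefschetz critical points. Its induced open book on the boundary is the standard one whose monodromy is the identity, and the contact structure it supports is the unique Stein fillable $\xi_0$ on $\#^k(S^1\times S^2)$. Now, for each $2$-handle attached along a Legendrian knot $L$ with framing $tb(L)\pm 1$, I would isotope $L$ to lie on a page of this open book; this step may require positively stabilizing the open book by plumbing on Hopf bands, exactly as in \cite{AO}. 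Once $L$ sits on a page, attaching a $2$-handle with framing $tb-1$ extends the fibration by adding a positive Lefschetz critical point whose vanishing cycle is $L$ (equivalently, the monodromy is composed with a right-handed Dehn twist along $L$), while framing $tb+1$ extends the fibration by adding an achiral Lefschetz critical point (the monodromy is composed with a left-handed Dehn twist along $L$). Iterating over all $2$-handles produces an ALF over $D^2$ with bounded fibers and total space $X$.

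For the converse \emph{ALF} $\Rightarrow$ \emph{almost Stein}, I would read the handle decomposition directly off the ALF: the regular fiber retracts onto a wedge of circles, giving a $0$-handle together with several $1$-handles, and each Lefschetz or achiral critical point contributes a $2$-handle attached along the corresponding vanishing cycle on a page of the boundary open book. The boundary open book determines $\xi_0$, and after Legendrian realization on the page the Thurston--Bennequin framing of a vanishing cycle agrees with the page framing; positive critical points thus give $tb-1$ attachments (good handles), achiral critical points give $tb+1$ attachments (defective handles). This is exactly an almost Stein structure in the sense of the earlier definition.

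The only genuinely nontrivial ingredient is the ``Legendrian-on-the-page'' step in the forward direction, since one must modify the open book (by positive Hopf plumbings) to bring an arbitrary Legendrian $L$ onto a page without changing its contact-geometric framing. This is precisely the content of the Akbulut--Ozbagci algorithm, and the point I would emphasize is that it is completely insensitive to whether $L$ is eventually to be attached with framing $tb-1$ or $tb+1$: the stabilization procedure takes place at the level of the open book on $\partial X^{(1)}$ and only uses the Legendrian isotopy class of $L$, so it applies uniformly in both the positive and achiral cases. The rest of the argument is then bookkeeping, and the main obstacle is really just verifying that the sign dictionary between handle framings and Lefschetz signs commutes with all the intermediate handle slides and stabilizations.
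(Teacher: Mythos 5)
Your proposal is correct and follows essentially the same route as the paper: both directions reduce to the Akbulut--Ozbagci algorithm together with the dictionary that a $tb-1$ (resp.\ $tb+1$) attachment adds a positive (resp.\ achiral) Lefschetz critical point, and you correctly identify that the algorithm is blind to this sign. The only cosmetic differences are that the paper makes your ``Legendrian on a page'' step explicit via the square-bridge position on a Seifert surface of a $(p,q)$ torus knot (using Lemma \ref{desta} to destabilize and identify the monodromy), treats $1$-handles by carving the disks bounded by dotted circles realized as sections rather than starting from the open book of $\natural^k(S^1\times D^3)$, and only writes out the forward direction.
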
   

Combining this result with our previous observation, Theorem \ref{alstein}, we get the Harer's theorem \cite{H}:

\begin{cor}
Every $4$-dimensional handlebody without $3$ or $4$ handles admits an ALF over disk with fibers having non-empty boundary.
\end{cor}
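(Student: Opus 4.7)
The plan is a direct two-step composition of the two named results just stated. Given a $4$-dimensional handlebody $X$ with no $3$- or $4$-handles, I would first invoke Theorem \ref{alstein} to equip $X$ with an almost Stein structure (in fact one with at most a single defective $2$-handle). Then I would apply the forward direction of Theorem \ref{akoz} of Akbulut--Ozbagci to the almost Stein manifold $X$; the theorem asserts the existence of an ALF $X\to D^{2}$ whose fibers are bounded, i.e.\ have non-empty boundary, which is precisely the conclusion demanded by the corollary.

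Since both ingredients are already established, there is no real obstacle beyond verifying that the hypotheses line up: the handlebody hypothesis of Theorem \ref{alstein} is exactly the hypothesis of the corollary, and the almost Stein output of that theorem is exactly the input to Theorem \ref{akoz}. The only terminological point worth a brief remark in the write-up is that ``bounded fibers'' in the Akbulut--Ozbagci convention means fibers with non-empty boundary, so the two phrasings coincide. Thus the proof should be nothing more than a one-line chain of implications citing Theorems \ref{alstein} and \ref{akoz} in sequence, with no further construction required.
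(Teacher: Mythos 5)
Your proposal is correct and is exactly the paper's argument: the corollary is stated immediately after Theorem \ref{akoz} precisely as the combination of Theorem \ref{alstein} (every such handlebody is almost Stein) with the almost Stein $\Rightarrow$ ALF direction of Theorem \ref{akoz}. Nothing further is needed.
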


\begin{proof}[Proof of Theorem \ref{akoz}] 
%%% was: \begin{proof} \textit{of Theorem \ref{akoz}:} 

\vspace{.05in}

We will prove almost Stein implies ALF part only. First, we prove a special case where $X$ has no $1$-handles. By assumption, $X$ is obtained by attaching several $2$-handles along a Legendrian link $L$ in $S^3$ with standard contact structure. In the front projection of $L$, we replace all local maxima and minima with corners  and then turn the picture $45^\circ$ counter clockwise to get the square bridge diagram of $L$. This diagram allows us to find a $(p,q)$ torus knot $K$ and a Seifert surface $\Sigma$ of $K$ with the following properties:
\vspace{.05in}
\begin{enumerate}
\item\label{iteaone} Complement of $K$ fibers over $S^1$ with fibers $\Sigma$, forming an open book decomposition of $S^3$, which  supports the standard contact structure of $S^3$,
\item\label{iteatwo} The open book decomposition in item (\ref{iteaone}) extends to a PALF of $4$-ball $B^4$ with $(p-1)(q-1)$ vanishing cycles,
\item The link $L$ can be put on $\Sigma$ such that the surface framing agrees with the Thurston-Bennequin framing.
\end{enumerate}

Once we achieve to find such $K$ and $\Sigma$, PALF of $B^4$ in item (\ref{iteatwo}) extends across the $2$-handles as an ALF. Fibers are still diffeomorphic to $\Sigma$ but the monodromy is composed with product of Dehn twists along components of $L$. Dehn twists are right handed for good handles and left handed for defective ones. 

\vspace{.05in}

The original construction of $K$ and $\Sigma$ is due to Lyon \cite{Ly}. Without loss of generality, we may assume that $L$ is contained in the cube $[-1,1]\times[0,1]\times[0,1]\subset \mathbb{R}^3\subset S^3$. Say, $L$ has $p$ horizontal and $q$ vertical segments. Choose numbers $1>a_1>a_2>\cdots a_p>0$ , and $0<b_1<b_2<\cdots<b_q<1$, and a small constant $\eps >0$.  For the $i$-th horizontal segment of $L$, we put the plate $\{1\}\times[0,1]\times [a_i-\eps,a_i+\eps]$. For the $j$-th vertical segment, we put the plate $\{-1\}\times [b_j-\eps,b_j+\eps]\times [0,1]$. We connect horizontal segments to vertical ones by a band making quarter twist. The result is a Seifert surface of a $(p,q)$ torus knot with minimal genus. 

\vspace{.05in}

Clearly, we can put $L$ on the surface $\Sigma$ constructed in this way. It is an easy exercise to verify that page framing on each component of $L$ is its writhe minus the number of \emph{south-west} corners. This is exactly the recipe for Thurston-Bennequin framing. If we project $\Sigma$ to $yz-$plane, we get a $(p,q)$-grid. This grid has $(p-1)(q-1)$ holes in it. Take a loop enclosing one of these holes inside the grid and lift it to $\Sigma$. This curve has self linking number $-1$ with respect to the page framing. Hence, we can destabilize the Hopf band around the curve as in lemma \ref{desta}, leaving $(p-1)(q-1)-1$ holes in the grid. If we continue to destabilize, we can verify (\ref{iteatwo}) by explicitly finding the monodromy curves.

\vspace{.05in}

Finally, we saw that the monodromy of $(p,q)$ torus knot is a product of right handed Dehn twists. Therefore, the contact structure it supports is Stein fillable. Eliashberg \cite{E2} proved that every such contact structure on $S^3$ is isotopic to the standard one. 

\begin{rem}
One problem in contact topology is to find an explicit open book decomposition supporting a given contact structure starting from its contact surgery diagram. As noticed by Stipsicz \cite{St}, the algorithm we described above provides an open book decomposition supporting a contact structure given by a surgery diagram if the framing coefficients are $\pm 1$. Ozbagci extended this result for the case of rational surgery in \cite{O}. The idea of destabilizing the unnecessary Hopf bands in this algorithm is used by Arikan in \cite{Ar} to find a smaller genus open book supporting the given contact structure.
\end{rem}

We continue the proof by considering the general case. First, convert all the $1$-handles into circles with dots as described at the end of section \ref{stei}. Ignoring the dots for a minute, we apply our algorithm to this  new Legendrian link we obtained to put it on a page of a $(p,q)$ torus knot. Each component of the Legendrian link is a non-separating curve on a page. By an isotopy we can put those components which represent $1$-handles in such a position that each one of them intersects every page exactly once. We push the disks bounded by these circles inside of $4$-ball, each one of them is realized as a section, then scoop out tubular neighborhoods of these sections. The last operation creates holes on the fibers. Of course, the number of these holes is the same as the number of $1$-handles.  
\end{proof}

As an illustration, we apply the algorithm described in the proof to get ALF pictures of the last two Legendrian handlebody diagrams in Figure \ref{fig:legamend}. In Figure \ref{fig:exalf1} we put both diagrams on Seifert surface of a $(7,7)$ torus link. The black curves represent $1$-handles. That is why they are clasped to the binding. The monodromy of the ALF on the top picture,  is a left handed Dehn twist along the red curve composed with the monodromy of the torus knot; whereas the monodromy of the PALF on the bottom picture, is a right handed Dehn twist along the
blue curve composed with the monodromy of the torus link.  

\newpage

\begin{figure}[h]
	\includegraphics[width=0.54\textwidth]{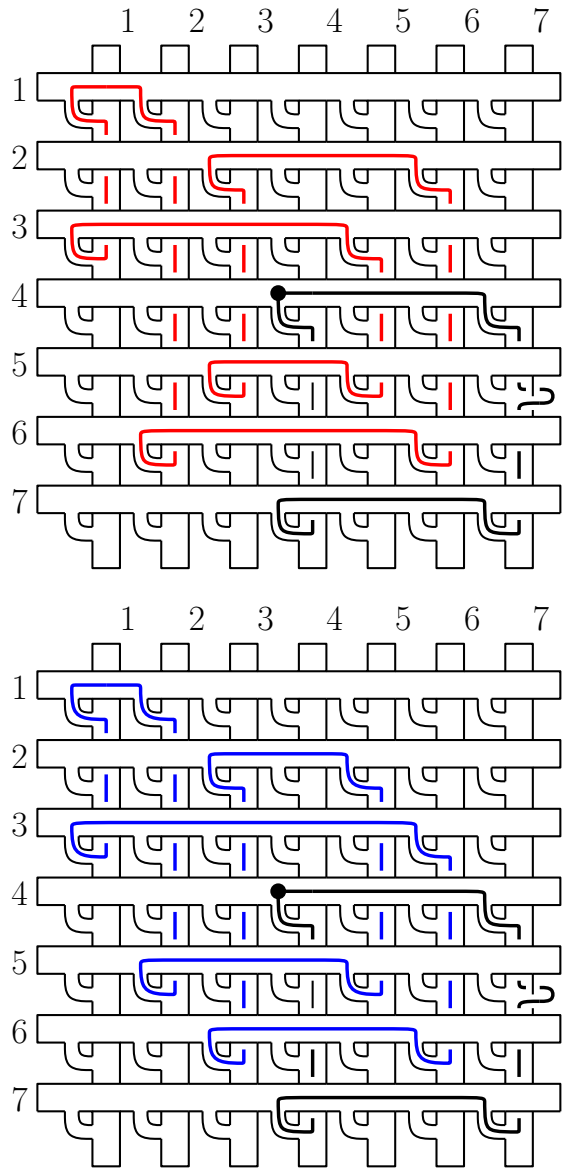}
	\caption{Two ALFs}
	\label{fig:exalf1}
\end{figure}

\newpage

\begin{rem}
A quick way of visualizing how the ``circles with dots'' in Figure 8 bound disks intersecting each page of the ALF once is as follows (c.f. \cite{AO}): First draw the handlebody picture of the ALF, for example the first picture of  Figure 9. Note that since the self linking of any circle with dot $C$ is $-1$, by the destabilization move of Lemma 5.1, we can assume the position of $C$ as in the picture. Now after the indicated isotopies, $C$ looks like in second picture of Figure 9, where it obviously bounds a disk intersecting each page of the ALF once (note the change of framing of $C$  in this process).
% intersecting each fiber of the ALF once.

\end{rem}
\begin{figure}[h]
	\includegraphics[width=0.6\textwidth]{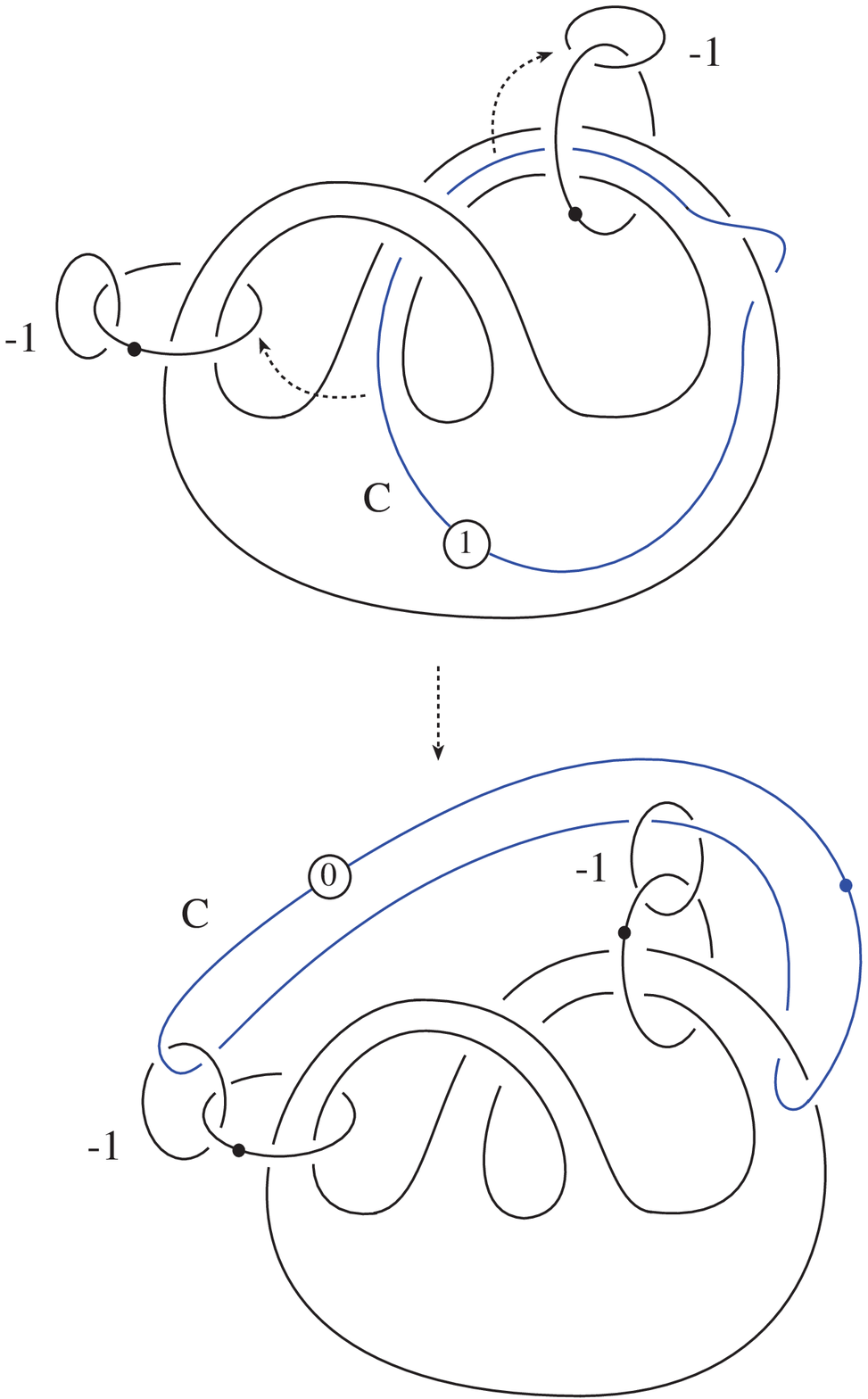}
	\caption{Carving}
	\label{fig:carving}
\end{figure}

\section{Some ALF Moves and Their Effects}\label{movesec}

In this section, we are going to describe a set of moves that produce new ALF out of a given one. We will describe the moves by means of a handlebody diagram in which $2$-handles are drawn in square-bridge position. The corresponding ALF should be understood to be constructed as in Section \ref{alfol}. It is our hope that these moves can be extended to result in a uniqueness theorem for Lefschetz type fibrations.

\vspace{.05in}

\textbf{Move 0:}(\emph{Refining}) Our first move is increasing the number of grids on fibers constructed as in section \ref{alfol}. It amounts to positively stabilizing the initial ALF several times. Most of the forthcoming moves require refining before we perform them. We are going to omit saying that we apply this move when it is clear from the context what sort of refining must be done.

\vspace{.05in}

\textbf{Move 1:}(\emph{Isotopy}) Any sort of isotopy in the category of square-bridge knots is applicable provided that it does not change the Thurston-Bennequin framing. This includes \emph{square-bridge Reidemeister moves} which we show in Figure \ref{fig:R1}.

\begin{figure}[h]
	\includegraphics[width=0.40\textwidth]{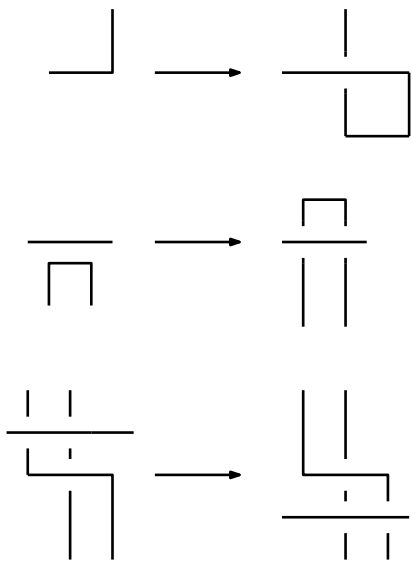}
	\caption{Square-bridge Reidemeister moves}
	\label{fig:R1}
\end{figure}

\textbf{Move 2:}(\emph{Stabilization}) We include both positive and negative stabilizations in our list of moves. As mentioned above, increasing  the grid is a special type of positive stabilization. In general, we indicate positive (resp. negative) stabilizations by creating a pair of canceling $1$-handle and a $2$-handle where the legs of the $1$-handle are strung on the binding and the $2$-handle has framing $tb-1$ (resp. $tb+1$).

\vspace{.05in}

\textbf{Move 3:}(\emph{Finger Move}) Let $K$ be an attaching circle of a $2$-handle lying on a page with framing either one less or one more than the framing induced by the pages. Near $K$, we stabilize our ALF and make $K$ run over the $1$-handle of the stabilization without changing its framing, see Figure \ref{fig:fingeralt2}. This move is called positive or negative finger move depending on what type of stabilization we made. Performing one positive and one negative finger move consecutively does not change the total space of the ALF. However, it adds or subtracts $2$ to the number obtained by evaluating the first Chern class of the boundary contact structure on the meridian of $K$ depending upon the orientation (see \cite{EF} Lemma 10). Positive finger move has a nice interpretation in terms of square-bridge knots if the grid is refined enough. Given any horizontal or vertical segment of $K$, we create a small rectangle on top of it as in Figure \ref{fig:finger}, then install this to our ALF. Clearly, this adds  one negative twist to the page framing as we traverse the rectangle. Since the monodromy of a $(p,q)$ torus link is a product of right handed Dehn twists only, there is no similar interpretation for left finger move in our language. 

\vspace{.1in}

\begin{figure}[h]
	\includegraphics[width=0.70\textwidth]{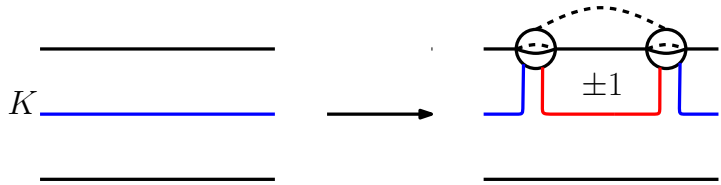}
	\caption{Positive and negative finger moves}
	\label{fig:fingeralt2}
\end{figure}

\begin{figure}[h]
	\includegraphics[width=0.49\textwidth]{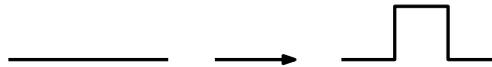}
	\caption{Another interpretation of positive finger move}
	\label{fig:finger}
\end{figure}

\vspace{.1in}

\section{Matching the Open Books}\label{matchob}

In this section, we are going to prove a technical lemma to match open books of a convex ALF and a concave BLF.

\begin{lem}

Let $X=X_1\cup X_2$ where $X_1$ is a convex ALF and $X_2$ is a concave BLF constructed as in either proposition \ref{concavepen} or \ref{broconcave}. Assume that the first homology group of $\partial X_1=\partial X_2$ has no $2$-torsion. Then Lefschetz type fibrations on $X_1$ and $X_2$ can be modified such that the boundary open books match. Moreover, we can arrange that every achiral vanishing cycle on the new convex ALF links twice with an unknot on a page. 
\end{lem}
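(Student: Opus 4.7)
The plan is to first make both boundary contact structures overtwisted, then use Eliashberg's classification to match them up to isotopy, then apply Giroux's correspondence to match the supporting open books, and finally secure the linking condition via the finger-move construction of Section \ref{movesec}. Throughout, I will think of the two contact structures $\xi_1 = \xi_{X_1}$ and $\xi_2$ on $Y := \partial X_1 = \partial X_2$ as being supported by the boundary open books of the ALF and the concave BLF respectively.

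I would begin by negatively stabilizing the convex ALF on $X_1$ several times; each such operation creates a cancelling pair of a $1$-handle and a $(tb+1)$-framed $2$-handle, and produces an overtwisted disk on the boundary. Symmetrically I would stabilize the concave BLF on $X_2$ by one of the procedures of Section \ref{concavesec}, so that both $\xi_1$ and $\xi_2$ become overtwisted. Next I would adjust the first Chern classes to coincide. Using the formula $\operatorname{PD}(c_1(\xi_{X_1})) = \sum k_i [\mu_i]$ with $k_i = \operatorname{rot}(L_i)$ from Section \ref{stei}, and the fact that the rotation numbers of Legendrian unknots introduced by stabilization can be set arbitrarily by adding zigzags, any integral combination of meridians is realizable as a change in $c_1$. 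The hypothesis that $H_1(Y)$ has no $2$-torsion guarantees that $c_1(\xi_1) - c_1(\xi_2)$ admits such an integral lift. Once the Chern classes agree, the remaining $\mathbb{Z}/d$ ambiguity is measured by the $3$-dimensional invariant $d_3$; this can be adjusted in increments of $1$ by performing paired stabilizations on either side that are topologically neutral but shift $d_3$.

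At this point the two contact structures are homotopic as oriented $2$-plane fields, so Eliashberg's theorem makes them isotopic. Giroux's correspondence then provides a finite sequence of positive and negative Hopf-band plumbings relating the two open books. On the convex side, a positive Hopf plumbing is realized by adding a Lefschetz vanishing cycle and a negative Hopf plumbing by adding an achiral Lefschetz vanishing cycle introduced as a cancelling pair; on the concave side, the corresponding operations are the Lefschetz or fold-pair stabilizations of Figures \ref{fig:stabconcave} and \ref{fig:goodstab}. This is the only non-constructive step, and it may introduce extra achiral singularities on the convex side and extra broken singularities on the concave side, exactly as noted in the proof of Theorem \ref{mainthm}.

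Finally, to secure the linking condition, observe that every achiral vanishing cycle in the new convex ALF has been introduced as the $(tb+1)$-framed $2$-handle of a cancelling pair --- either the one from the proof of Theorem \ref{alstein}, one of the negative stabilizations above, or one of those produced by Giroux matching. Applying a positive-then-negative finger move (Move 3 of Section \ref{movesec}) near each such handle, which is topologically neutral on the ALF, arranges the attaching circle to traverse the $1$-handle of its cancelling pair geometrically twice. The cocore of that $1$-handle, converted to a ``circle with dot'' as in Figure \ref{fig:carving}, bounds a disk meeting each page exactly once and has self-linking $-1$, so by Lemma \ref{desta} it destabilizes onto a page of the boundary open book as an unknot, which the achiral vanishing cycle then links exactly twice. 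The main obstacle is the coupling between the $c_1$ and $d_3$ adjustments in the second paragraph: the stabilizations used to correct $c_1$ also shift $d_3$, and one must be careful to restore the $d_3$ matching with topologically neutral pairs at the end. The no-$2$-torsion hypothesis on $H_1(Y)$ is precisely what lets this bookkeeping go through.
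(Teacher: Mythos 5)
Your proposal is correct and follows essentially the same route as the paper's proof in Section \ref{matchob}: match $c_1$ and then $d_3$, negatively stabilize so that both contact structures are overtwisted, invoke Eliashberg's classification of overtwisted contact structures and then Giroux's stabilization theorem, and realize the resulting positive stabilizations as achiral singularities on the convex side and round-handle (fold) singularities on the concave side. The one point to correct is the role of the no-$2$-torsion hypothesis: it is not about integral lifts or the $c_1$--$d_3$ bookkeeping, but is the hypothesis under which two oriented $2$-plane fields with equal $c_1$ and $d_3$ are homotopic (Gompf \cite{G}); for the $c_1$ step the paper instead uses paired finger moves, which change the meridian coefficients by $2$, and closes the parity gap by observing that $c_1(\xi_2)=0$ for the concave pieces of Propositions \ref{concavepen} and \ref{broconcave} and that $c_1$ of any oriented $2$-plane field is even.
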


\begin{proof}
Denote the induced open books on $\partial X_1$ and $\partial X_2$ be $\it{ob}_1$ and $\it{ob}_2$. The contact structures supported by these two open books will be denoted by $\xi_1$ and $\xi_2$. By a theorem of Giroux \cite{Gi}, we can match the open books up to positive stabilization if we can match the associated contact structures.

\begin{thm}(Giroux)
Two open book decompositions of a $3$-manifold have common positive stabilizations if and only if they support isotopic contact structures.
\end{thm}

At this point, we would like to explain what we mean by matching the open books. Boundaries of convex and concave sides are identified by an orientation reversing diffeomorphism $f:\partial X_2 \to \partial X_1$. We apply Giroux's theorem to the open books $f^*\it{ob}_1$ and $ob_2$ to conclude that they have common positive stabilization if and only if the contact structures $f^*\xi_2$ and $\xi_2$ are isotopic. Note that stabilizing $\it{ob}_2$ requires a round handle \linebreak attachment  as in section \ref{concavesec}.    Consequently, new broken singularities are formed in the concave side. In addition, positive stabilization of $f^*\it{ob}_1$ means that we need to negatively stabilize $\it{ob}_1$, so, new achiral singularities are formed in the convex side. 

\vspace{.05in}

Now, we consider the problem of matching the contact structures $f^*\xi_1$ and $\xi_2$.   Before matching them up to isotopy, we study a more elementary problem, namely, matching them up to homotopy. To simplify the notation we drop $f^*$ from $f^*\xi_1$ and $f^*\it{ob}_1$. As explained in \cite{G}, two oriented $2$-plane distributions on a $3$-manifold are homotopic if and only if they have the same Chern class and the $3$-dimensional invariant  $d_3$, in case the first homology has no $2$-torsion.
\vspace{.05in}

We can make the Chern class of $\xi_1$ equal to $c_1(\xi_2)$ using techniques of Section \ref{movesec}. The Poincar\'e dual of $c_1(\xi_1)$ is represented by a cycle $\sum k_i[\gamma_i]$ where $\gamma_i$ is a meridian of a $2$-handle. The handle could be good or defective but we know that its attaching circle is in square-bridge position. The number $k_i$ is the number of up corners minus the number of down corners. We may increase or decrease this number by $2$ applying a positive and a negative finger move consecutively. It can be shown that in our examples $c_1(\xi_2)=0$. Therefore we are done with matching Chern classes provided that $c_1(\xi_1)$ has correct parity. This is true because $c_1(\xi)$ is even for an arbitrary oriented $2$-plane distribution on a $3$-manifold $M$. To see this, fix a trivialization of the tangent bundle. Unit normal vector field to $\xi$ defines a map $g:M\to S^2$ such that $\xi=g^*TS^2$. So $c_1(\xi)=g^*c_1(TS^2)=2g^*PD([\text{point}])$.

\vspace{.05in}

To equate the $3$-dimensional invariants, we negatively stabilize one of $\it{ob}_1$ and $\it{ob}_2$ enough number of times. If we want to avoid achiral singularities in the concave side, we need to form an extra round handle singularity and perform two positive stabilizations on $\it{ob}_2$.

\vspace{.05in}

Finally, we apply a theorem due to Eliashberg \cite{E3}, which says that two overtwisted contact structures on a $3$-manifold are homotopic if and only if they are isotopic. To guarantee that both $\xi_1$ and $\xi_2$ are overtwisted we negatively stabilize both their open books once more.

\end{proof}
\section{ALF Repairing or PALFication}

This section is devoted to a procedure of getting rid of achiral singularities of a convex ALF without changing the total space and the boundary open book. For the sake of clarity, here rather than proving the main result in generalities, we will explain the proof on an example.

\begin{exam}
We first describe our initial ALF. Consider the $4$-ball $B^4$ fibering over disk with disk fibers. This fibration induces  an open book decomposition of the boundary $\partial B^4= S^3=\mathbb{R}^3\cup \{\infty\}$. To visualize this open book, we indicate the binding as an unknot drawn as a rectangle in $\mathbb{R}^3$. Then, the meridian of this rectangle is a section of a fiber bundle over circle with disk fibers. Next, we negatively stabilize this open book to get a convex ALF with one defective handle. Recall that negative stabilization amounts to creating a pair of cancelling $1$ and $2$-handles where the feet of $1$-handles are strung on the binding and the attaching circle of the $2$-handle is on a page with framing one more than the surface framing. The main theorem requires a defective handle to pass twice over a $1$-handle. This is why we change the picture by isotopy and carve a $4$-ball in such a way that the $2$-handle satisfies the assumption of the main theorem. The square-bridge position of the final handlebody is given in Figure \ref{fig:repairsq} on the left. A compatible ALF is constructed as in proof of Theorem \ref{akoz} with the exception that the $1$-handle of the cancelling pair is indicated as a pair of $3$-balls, on top of Figure \ref{fig:repairPALF}. Now we perform a left twist along the $1$-handle to repair the ALF. The square-bridge diagram and the corresponding PALF are shown on the right of Figure \ref{fig:repairsq} and  on the bottom of Figure \ref{fig:repairPALF}. Note that two diagrams in Figure \ref{fig:repairPALF} differ by a Dehn twist along the curve labeled as $\alpha$.

\begin{figure}[h]
	\includegraphics[width=0.70\textwidth]{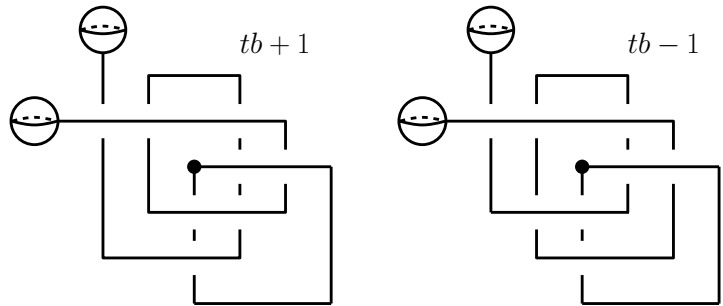}
	\caption{ALF repairing}
	\label{fig:repairsq}
\end{figure}

\newpage

\begin{figure}[h]
	\includegraphics[width=0.45	\textwidth]{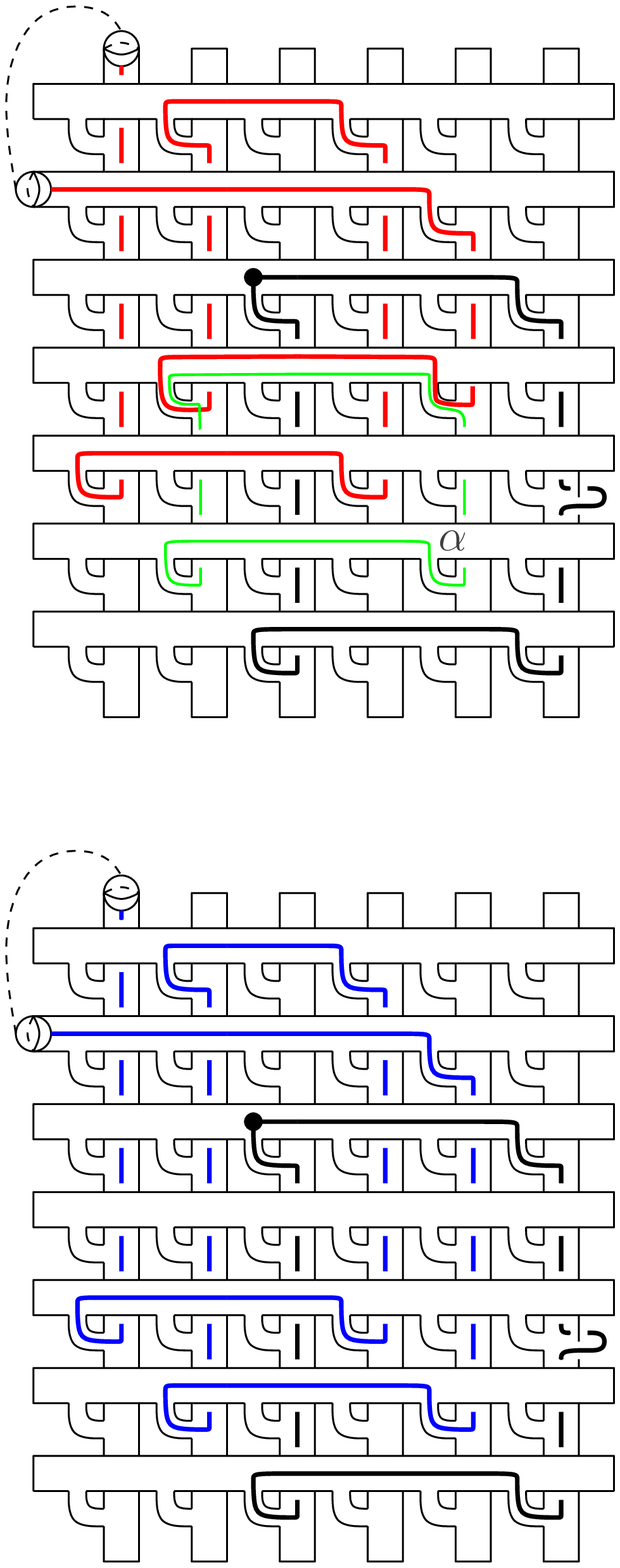}
	\caption{Alf picture}
	\label{fig:repairPALF}
\end{figure}

\newpage 

As explained in the outline, after decomposing  $X$ as a union of a convex ALF and a concave BLF such that the boundary openbooks match, for every defective 2-handle on the convex side we carefully choose a section linking the attaching circle of that  handle  twice. By carving the convex side along all such sections  and attaching the corresponding 2-handles  to the concave side, we obtain another convex-concave decomposition. Note that the boundary open books still match, but this time convex part satisfies the assumptions of the following theorem which allows us to  exchange  all the achiral singularities with Lefschetz singularities.     

\vspace{.05in}

\begin{thm}
Suppose that a $2$-handlebody $X$ admits a convex ALF constructed as in the proof of Theorem \ref{akoz}, possibly with some extra stabilizations.  Assume further that every framed link representing an achiral vanishing cycle passes over  a $1$-handle twice. Then, $X$ also admits a PALF, and there is a self diffeomorphism  of $X$ (isotopic to identity), which is a fibration equivalence between ALF and PALF structures of $X$ outside of the achiral singular points.   
\end{thm}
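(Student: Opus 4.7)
The plan is to realize the ALF-to-PALF conversion by a single self-diffeomorphism $\psi$ of $X$, isotopic to the identity, whose support is a disjoint union of neighborhoods of the cocores of the 1-handles over which the defective vanishing cycles pass. I would treat defective handles one at a time and compose the local modifications at the end; after the extra stabilizations of Move 2 (Section \ref{movesec}) if necessary, we may assume distinct defective handles traverse distinct 1-handles, so their supports are disjoint.

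Fix a defective handle $h$ with attaching circle $K$ of framing $\mathrm{tb}(K)+1$, and let $H$ be the 1-handle that $K$ passes over geometrically twice and algebraically zero times. Using the isotopies of Section \ref{movesec}, I would arrange the two strands of $K$ inside a cocore neighborhood $N$ of $H$ to run as parallel oppositely-oriented arcs. Identifying $N$ with $[-1,1]\times D^3$ (the cocore being $\{0\}\times D^3$), define
\[
\psi(t,x) = (t, R(t)\cdot x) \text{ on } N, \qquad \psi = \mathrm{id} \text{ outside } N,
\]
where $R\colon [-1,1]\to SO(3)$ is a smooth loop based at the identity representing the generator of $\pi_1(SO(3)) = \mathbb{Z}/2$. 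Since $R(\pm 1) = \mathrm{Id}$, $\psi$ glues to a well-defined self-diffeomorphism of $X$, and by Cerf's theorem $\pi_0\mathrm{Diff}(D^3,\partial D^3) = 0$, so $\psi$ is isotopic to the identity.

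The key effect is that inside $N$ the two parallel strands of $K$ wind once around each other under $\psi$, producing a change of $\pm 1$ in their pairwise linking and a net change of $\pm 2$ in the framing. Choosing the orientation of $R$ so that this change is $-2$ (the ``left twist'' of the example in Figures \ref{fig:repairsq}--\ref{fig:repairPALF}), the curve $K' := \psi(K)$ lies on a new page of the transported fibration $\pi_{\mathrm{ALF}}\circ\psi^{-1}$ with induced framing $\mathrm{tb}(K')-1$. Equivalently, on each regular fiber $\psi$ restricts to a Dehn twist along a simple closed curve $\alpha$ bounding a disk that meets the cocore of $H$ once, and $K' = T_\alpha(K)$. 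Applying the Akbulut--Ozbagci algorithm of Theorem \ref{akoz} to the resulting handle decomposition yields a PALF on $X$, and by construction $\pi_{\mathrm{PALF}}\circ\psi = \pi_{\mathrm{ALF}}$ away from the achiral singular points, which is the claimed fibration equivalence.

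The main obstacle is the bookkeeping of orientations: one must check that the direction of $R$ can be chosen consistently at every defective handle so that each $\mathrm{tb}+1$ becomes $\mathrm{tb}-1$, rather than $\mathrm{tb}+3$. A secondary delicate point is verifying that the new vanishing cycles $K_i' = T_{\alpha_i}(K_i)$ remain non-separating on the fiber (the ``A'' in PALF); since Dehn twists preserve the (non-)separating property, this reduces to the corresponding fact for the original attaching circles, which follows from their square-bridge realization as loops encircling interior holes of the $(p,q)$-grid Seifert surface.
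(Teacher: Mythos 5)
Your overall route is the one the paper takes: the paper proves this theorem only through the worked example of Figures \ref{fig:repairsq} and \ref{fig:repairPALF}, where the repair is a ``left twist along the $1$-handle'' whose effect on the fibers is a Dehn twist along the curve $\alpha$ dual to the cocore, so your general rotation model and the identification $K'=T_\alpha(K)$ are a faithful formalization of that. But two of your bookkeeping steps are wrong as stated. The first is the framing count. The surgery coefficient of a $2$-handle cannot drop by $2$ under a self-diffeomorphism of $X$ (otherwise the move would change the diffeomorphism type), and indeed in your model it does not: the full mutual winding of the two oppositely oriented strands contributes $-2$ to the writhe, but the rotation $R(t)$ also turns each strand's own normal framing once, contributing $+1$ per strand, for a net change of $0$ (more generally $\pm a^{2}$, with $a$ the algebraic intersection with the belt sphere --- this is exactly where the hypothesis ``geometrically twice, algebraically zero'' enters). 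What actually makes the move work is that the clasp introduced between the two strands \emph{raises the Thurston--Bennequin number of the new square-bridge position of $K'$ by $2$}, so the unchanged integer coefficient, which was $tb(K)+1$, becomes $tb(K')-1$. This is precisely the arithmetic of the paper's model computation in Example \ref{deffix} (create a cancelling pair with a $-1$-framed $2$-handle, slide both strands over it, cancel). Your phrase ``net change of $\pm 2$ in the framing'' is the wrong mechanism, even though your concluding formula $tb(K')-1$ is the right answer.

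The second problem is the isotopy to the identity. Cerf's theorem $\pi_0\mathrm{Diff}(D^3,\partial D^3)=0$ is not the relevant input: $\psi$ is built from a \emph{loop} $R$ in $SO(3)\subset \mathrm{Diff}(D^3)$ based at the identity, so among diffeomorphisms supported in the $1$-handle its isotopy class is governed by $\pi_1(\mathrm{Diff}(D^3))\cong\pi_1(O(3))=\mathbb{Z}/2$, in which your $R$ is by construction the \emph{nontrivial} element. Hence $\psi$ is not isotopic to the identity rel the complement of $N$; its restriction to $\partial X^{(1)}$ is a Gluck-type twist of $\# S^1\times S^2$, which is not isotopic to the identity there. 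Note also that $\psi$ is not literally the identity on the free boundary $[-1,1]\times S^2$ of the $1$-handle, so it does not extend by the identity over the $2$-handles: one must reattach them along the twisted framed link and then argue separately that the induced self-diffeomorphism of $X$ --- and of the boundary open book, which is what the main theorem needs in order to reglue the concave side --- is isotopic to the identity. The paper's handle-slide realization of the twist packages this for free, since creation, slides and cancellation of handle pairs are realized by ambient isotopies; if you keep the rotation model, this step still has to be supplied.
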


\end{exam}

%\begin{figure}[h]
%	\includegraphics[width=.55\textwidth]{repairPALF.eps}
%	\caption{Alf picture}
%	\label{fig:repairPALF}
%\end{figure}

%{\bf Acknowledgements:} 

\vspace{.03in}

\end{document}